\title{On Analytical and Topological Properties of Separatrices in 1-D Holomorphic Dynamical Systems and Complex-Time Newton Flows}
\author{
	Marcus~Heitel\\ 
	Institute for Numerical Mathematics\\
	Ulm University\\
	Ulm, Germany
	\texorpdfstring{\And}{and}
	Dirk~Lebiedz \\
	Institute for Numerical Mathematics\\
	Ulm University\\
	Ulm, Germany
}
\begin{document}
\maketitle

\begin{abstract}
    Separatrices divide the phase space of some ho\-lo\-morphic dynamical systems into separate basins of attraction or  \enquote{stability regions} for distinct fixed points. \enquote{Bundling} (high density) and mutual \enquote{repulsion} of trajectories are often observed at separatrices in phase portraits, but their global mathematical characterisation is a difficult problem. 
	For 1-D complex polynomial dynamical systems we prove the existence of a separatrix for each critical point at infinity via transformation to the Poincar{\'e} sphere. We show that introduction of complex time allows a significantly extended view with the study of corresponding Riemann surface solutions, their topology, geometry and their bifurcations/ramifications related to separatrices. We build a bridge to the Riemann $\xi$-function and present a polynomial approximation of its Newton flow solution manifold with precision depending on the polynomial degree. 
\end{abstract}

\keywords{Poincar{\'e} Sphere \and Separatrix \and Slow Invariant Manifolds \and Complex (Time) \and Dy\-nam\-i\-cal Systems \and Riemann Surface \and Newton Flow \and Imaginary Time \and Wick Rotation \and Riemann's $\xi$-function \and Riemann Hypothesis}





\section{Introduction}
In this paper, we analyse properties of special phase space structures of the one{\hyp}dimensional complex autonomous dynamical system
\begin{equation}\label{eq:ode}
	z' = \dd[z]{t} = f(z), \quad z(0)=z_0 \in \C
\end{equation}
where $f:\C \ra \C$ is a holomorphic function. Many studies have been performed for such systems, e.g.\ see \cite{Broughan2003, Broughan2003a} and for corresponding Newton flows, e.g.\ see \cite{Neuberger2014}
\begin{equation}\label{eq:newtonflow}
	z' = -\frac{f(z)}{f'(z)}, \quad z(0)=z_0 \in \C,
\end{equation}
also and in particular in a context where $f(z)$ is number{\hyp}theoretically significant like the Riemann $\zeta(z)${\hyp} or $\xi(z)${\hyp}functions.
As a fundamental result it has been shown that holomorphic flows of type (\ref{eq:ode}) do not allow limit cycles and sectors of the flow around equilibria are confined by separatrices (see \cite{Broughan2003}).
The recent work of Schleich et al.~\cite{Schleich2018} suggests that the behaviour of $\xi$ Newton flow separatrices at infinity may contain information about the exact location of the equilibria of such flows, e.g.\ the non{\hyp}trivial zeroes of $\xi(z)$. 

The main focus of our article is to investigate holomorphic flows at infinity and identify analytical and topological properties of separatrices dividing two sectors around centres.
Mapping the complex phase space to the Poincar{\'e} sphere provides insight into the dynamics of the flow at infinity \cite{Perko2000}. Other transformations can reveal characteristics at infinity as well (see \cref{sec:infinity}). The global phase space behaviour of polynomial vector fields $f$ is well studied. We prove that critical points at infinity must have at least one incoming or outgoing trajectory which is a separatrix. Thus, separatrices are closely related to heteroclinic or homoclinic orbits connecting to critical points at infinity. This refers to the work of Davis and Skodje~\cite{Davis1999} and Al{\hyp}Khateeb et al.~\cite{Al-Khateeb2009} for dissipative real functions $f:\R^n \ra \R^n$ and their slow invariant manifolds. 

Broughan systematically analyses holomorphic flows in a couple of articles \cite{Broughan2003, Broughan2003a, Broughan2004, Broughan2005}. He refers to the set composed of a finite equilibrium and its characteristic orbits as the \emph{neighbourhood} of the corresponding equilibrium. Examples are a centre and all of its encircling periodic orbits or a stable node and its basin of attraction. The boundary of the neighbourhood of centres is the union of separatrices (=boundary components).
While Broughan considers separatrices primarily from an analytical point of view, we consider them as global objects on the manifold of solutions and deal with them topologically as well. In particular, the winding number of periodic orbits around their corresponding centres changes when crossing a separatrix. 

\Cref{sec:separatrices} gives a brief introduction into the topics of holomorphic flows and separatrices. A suitable definition of the separatrix is given as well. 
In \cref{sec:infinity}, analytical basics for the computation of critical points at infinity are introduced. We give a brief introduction to the Poincar{\'e} sphere and other transformations that make the analysis at infinity possible. It is shown that for certain polynomial dynamical systems, (limit) cycles cannot exist at infinity (and finite limit cycles cannot exist as well, cf. \cite{Broughan2003}). Moreover, equilibria at infinity and separatrices are directly correlated. 
\Cref{sec:topology} deals with the topology of centres and their periodic orbits. Since the right{\hyp}hand side of the ODE \eqref{eq:ode} is smooth, periodic orbits cannot intersect. Their winding number is $\pm 1$ depending on which side of the separatrix the orbits are located. 

\section{Separatrices}\label{sec:separatrices}
When considering holomorphic flows, e.g.\ $z' = \cosh\big(z-\frac{1}{2}\big)$ (see \Cref{fig:cosh}), it becomes evident that there are special trajectories which divide the whole bunch of trajectories into different regions of the same qualitative stability behaviour. Tra\-jectories in the neighbourhood bundle at these trajectories making the phase portrait look similar to slow invariant manifolds (see e.g.\ \cite{Heiter2018,Lebiedz2016,Lebiedz2011}).

\begin{figure}[htp]
	\begin{center}
		\includegraphics[width=0.7\linewidth]{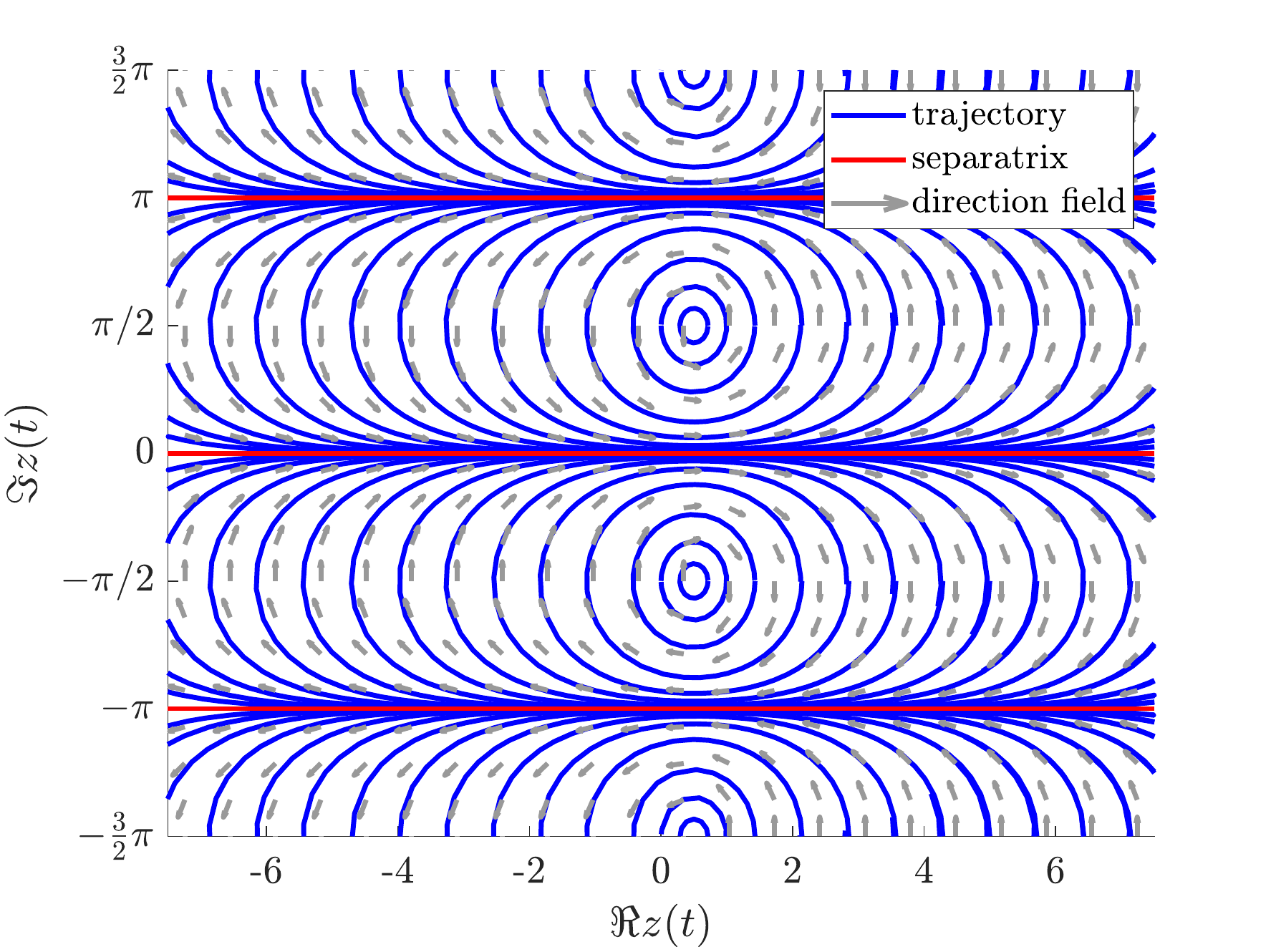}
		\caption{Holomorphic flow of $z'= \cosh(z-1/2)$. \enquote{Ordinary} trajectories in blue, separatrix in red, direction field (= normalized vector field) in grey.}\label{fig:cosh}
	\end{center}
\end{figure}

In \Cref{fig:cosh}, these trajectories are the lines $\Im z = k\pi, k \in \Z$ and the corresponding regions are the sets $S_k := \left\{z  \in \C \, : \, \Im z \in \big(k\pi, (k+1)\pi\big) \right\}$ for $k\in \Z$. Within such a region $S_k$ all trajectories are periodic orbits around the equilibrium $z_k^* := \frac{1}{2} + \left(k + \frac{1}{2}\right)\pi$. We call those separating lines \emph{separatrices}.
However, an appropriate and precise mathematical definition of separatrices is a controversial topic (cf. \cite{Broughan2003a}, Section 2 for a brief overview of definitions). In the context of holomorphic flows, the following Definition~\ref{def:sep} is reasonable and well suited as we will see later. 
\begin{definition}[Separatrix \cite{Broughan2003a}]\label{def:sep}
	A trajectory $\gamma$ is a \emph{positive (negative) separatrix} for the holomorphic flow \eqref{eq:ode} if there is a point $z \in \gamma$ such that the maximum interval of existence of the path starting at $z$ and following the holomorphic flow \eqref{eq:ode} in positive (negative) time is finite. 
\end{definition}
Although, this definition is not based on the separation of stability regions in the phase space, Broughan~\cite{Broughan2003, Broughan2003a} shows that the boundary of these stability regions consists of trajectories with finite maximum interval of existence, and thus separatrices.  

Since separatrices have finite maximum interval of existence, the vector field $f$ cannot be bounded on separatrices. Otherwise, one could enlarge the interval of existence. Therefore, a positive separatrix for entire functions $f$ must reach out to infinity in phase space. This motivates the investigation of the qualitative behaviour of holomorphic flows at infinity.

\section{Critical Points at Infinity}\label{sec:infinity}
In this section, techniques for investigation of the behaviour of dynamical systems at infinity are presented. We follow the introductions of Perko~\cite{Perko2000}, Meiss~\cite{Meiss2007}, Roeder~\cite{Roeder2003}, Gingold and Solomon~\cite{Gingold2013} into this topic. We use the identification $\C \simeq \R^2$ and consider two{\hyp}dimensional real dynamical systems. 

\paragraph{Polar Coordinates}
There are several possibilities to compactify and analyse complex{\hyp}valued dynamical systems. One obvious transformation uses polar coordinates $(r,\theta)$ and regards the limit $r\ra \infty$ (cf. \cite{Roeder2003}). 
Consider a polynomial, two{\hyp}dimensional real dynamical system of the form
\begin{equation}
\begin{array}{lll}\label{eq:2DPolySystem}
	x' &= \dd[x]{t} &=P(x,y)= P_0(x,y) +P_1(x,y) + \cdots + P_d(x,y)\\
	y'&=  \dd[y]{t} &=Q(x,y) = Q_0(x,y) +Q_1(x,y) + \cdots + Q_d(x,y),
\end{array}
\end{equation}
where $P$ and $Q$ are polynomials in $x$ and $y$ with $P_i(x,y)$ resp. $Q_i(x,y)$ being the homogeneous polynomial of degree $i$ in $P$ resp.\ $Q$. Using polar coordinates, the following system is obtained:
\begin{equation}
	\begin{aligned}\label{eq:2DPolarCoord}
		r' & = \cos \theta \, P(r\cos \theta, r\sin \theta) + \sin \theta \, Q(r\cos \theta, r\sin \theta) \\
		& = \sum\limits_{k=0}^{d} r^k \left(\cos \theta P_k(\cos \theta, \sin \theta) + \sin \theta Q_k(\cos \theta, \sin \theta) \right) \\
		\theta' &= \frac{1}{r}\left(-\sin \theta  P(r\cos \theta, r\sin \theta) + \cos \theta Q(r\cos \theta, r\sin \theta)\right)\\
		& = \frac{1}{r}\sum\limits_{k=0}^{d} r^k \left(-\sin \theta P_k(\cos \theta, \sin \theta) + \cos \theta Q_k(\cos \theta,\sin \theta) \right)
	\end{aligned}
\end{equation}
For taking the limit $r \ra \infty$, $r$ is replaced by $s = \frac{1}{r}$ and then the limit $s \ra 0+$ is considered
\begin{subequations}\label{eq:2DInvPolarCoord}
	\begin{align}
	s' &  = -s^2\sum\limits_{k=0}^{d} s^{-k} \left(\cos \theta P_k(\cos \theta, \sin \theta) + \sin \theta Q_k(\cos \theta, \sin \theta) \right) \label{eq:ds}\\
	\theta' & = s\sum\limits_{k=0}^{d} s^{-k} \left(-\sin \theta P_k(\cos \theta, \sin \theta) + \cos \theta Q_k(\cos \theta,\sin \theta) \right) \label{eq:dtheta}. 
	\end{align}
\end{subequations}
We denote the degrees of the right{\hyp}hand side of \eqref{eq:ds} in $s$ by $2-I$ and of \eqref{eq:dtheta} by $1-J$. As we will see, behaviour at infinity depends on the number $\hat{k}:=I-J$.

\paragraph{Poincar\'{e} Sphere}
In the transformation \eqref{eq:2DInvPolarCoord}, infinity corresponds to $s=0$. Handling of a fixed point at infinity might be complicated since infinity is just a single point here. Thus, the study of separatrices is hard if they accumulate at this single point of infinity.
An alternative is the projection to the Poincar\'{e} sphere which distinguishes the behaviour at infinity. 
To project a point $(x^*,y^*)$ of the dynamical system \eqref{eq:2DPolySystem} onto a point $(X^*,Y^*,Z^*)$ on the northern hemisphere
\begin{equation*}
	S^{2}_{+} = \left\{(X,Y,Z) : X^2+Y^2+Z^2=1, Z\ge 0\right\},
\end{equation*}
the $(x,y)$ plane is pinned in 3D coordinates to the plane $(x,y,1)$. The corresponding point $(X^*,Y^*,Z^*)$ is then obtained by calculating the intersection point of the straight line through the origin and $(x^*,y^*,1)$, and the surface of the northern hemisphere as shown in \cref{fig:PoincareSphere}.

\begin{figure}[ht]
	\centering
\begingroup%
  \makeatletter%
  \providecommand\color[2][]{%
    \errmessage{(Inkscape) Color is used for the text in Inkscape, but the package 'color.sty' is not loaded}%
    \renewcommand\color[2][]{}%
  }%
  \providecommand\transparent[1]{%
    \errmessage{(Inkscape) Transparency is used (non-zero) for the text in Inkscape, but the package 'transparent.sty' is not loaded}%
    \renewcommand\transparent[1]{}%
  }%
  \providecommand\rotatebox[2]{#2}%
  \newcommand*\fsize{\dimexpr\f@size pt\relax}%
  \newcommand*\lineheight[1]{\fontsize{\fsize}{#1\fsize}\selectfont}%
  \ifx\svgwidth\undefined%
    \setlength{\unitlength}{261.25442733bp}%
    \ifx\svgscale\undefined%
      \relax%
    \else%
      \setlength{\unitlength}{\unitlength * \real{\svgscale}}%
    \fi%
  \else%
    \setlength{\unitlength}{\svgwidth}%
  \fi%
  \global\let\svgwidth\undefined%
  \global\let\svgscale\undefined%
  \makeatother%
  \begin{picture}(1,0.98978361)%
    \lineheight{1}%
    \setlength\tabcolsep{0pt}%
    \put(0,0){\includegraphics[width=\unitlength,page=1]{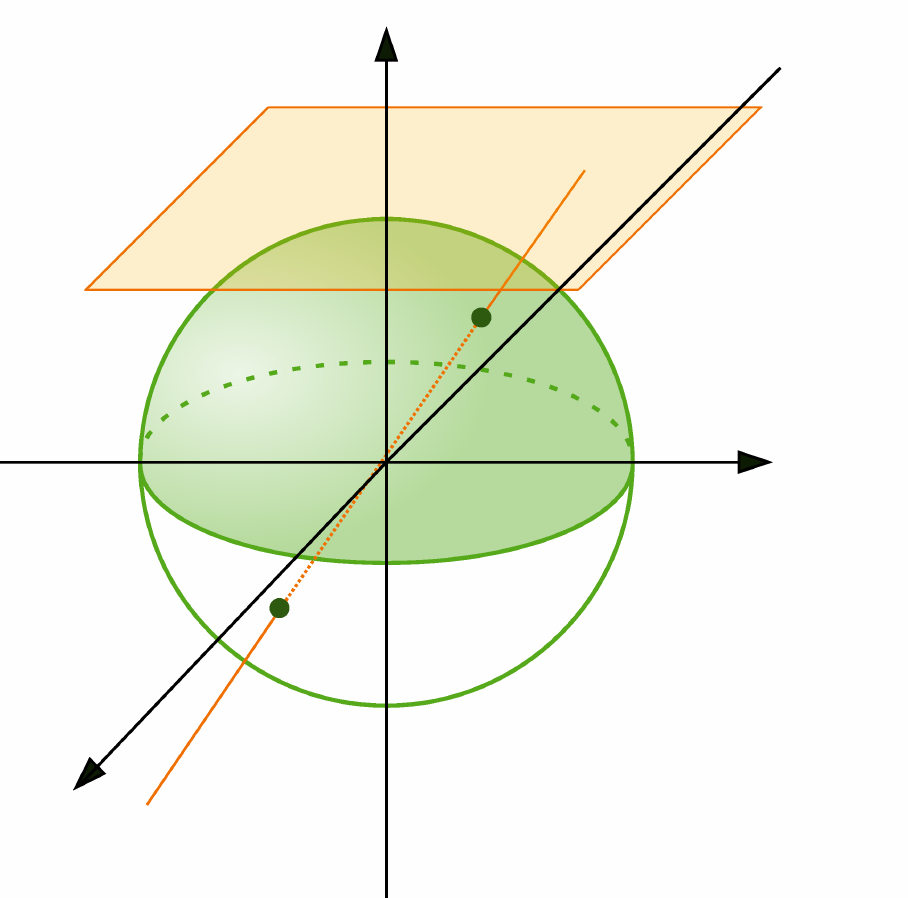}}%
    \put(0.03735657,0.09338544){\color[rgb]{0,0,0}\makebox(0,0)[lt]{\lineheight{1.25}\smash{\begin{tabular}[t]{l}$X$\end{tabular}}}}%
    \put(0.85926585,0.46932151){\color[rgb]{0,0,0}\makebox(0,0)[lt]{\lineheight{1.25}\smash{\begin{tabular}[t]{l}$Y$\end{tabular}}}}%
    \put(0.40992321,0.96651845){\color[rgb]{0,0,0}\makebox(0,0)[lt]{\lineheight{1.25}\smash{\begin{tabular}[t]{l}$Z$\end{tabular}}}}%
    \put(0,0){\includegraphics[width=\unitlength,page=2]{PoincareSphere.pdf}}%
    \put(0.68061247,0.65529413){\color[rgb]{0,0,0}\makebox(0,0)[lt]{\lineheight{1.25}\smash{\begin{tabular}[t]{l}$W=(X^*,Y^*,Z^*)$\end{tabular}}}}%
    \put(0,0){\includegraphics[width=\unitlength,page=3]{PoincareSphere.pdf}}%
    \put(0.5507621,0.83325791){\color[rgb]{0.54901961,0.54901961,0.54901961}\makebox(0,0)[lt]{\lineheight{1.25}\smash{\begin{tabular}[t]{l}$w=(x^*,y^*)$\end{tabular}}}}%
    \put(0,0){\includegraphics[width=\unitlength,page=4]{PoincareSphere.pdf}}%
  \end{picture}%
\endgroup%

	\caption{Construction of Poincar\'{e} Sphere}\label{fig:PoincareSphere}
\end{figure}
In short mathematical notation, the transformation can be written as the following two relations:
\begin{align*}
	(X,Y,Z) &= \left(\frac{x}{\sqrt{1+x^2+y^2}},\frac{y}{\sqrt{1+x^2+y^2}},\frac{1}{\sqrt{1+x^2+y^2}} \right)\\
	(x,y) &= \left(\frac{X}{Z},\frac{Y}{Z}\right).
\end{align*}

Simple calculations show that the dynamics on the Poincar\'{e} sphere are given by
\begin{equation*}
	\begin{aligned}
	 	X' &= Z\left((1-X^2)P\big(\frac{X}{Z},\frac{Y}{Z}\big)-XYQ\big(\frac{X}{Z},\frac{Y}{Z}\big)\right)\\
	 	Y' &= Z\left((1-Y^2)Q\big(\frac{X}{Z},\frac{Y}{Z}\big)-XYP\big(\frac{X}{Z},\frac{Y}{Z}\big)\right)\\
	 	Z' &= -Z^2\left(XP\big(\frac{X}{Z},\frac{Y}{Z}\big)+YQ\big(\frac{X}{Z},\frac{Y}{Z}\big)\right)
	\end{aligned}
\end{equation*}
In the following, the degree of the polynomials $P$ and $Q$ is assumed to be a natural number $d \in N$. Since we are interested in the case $Z=0$ which corresponds to infinity and $P\big(\frac{X}{Z},\frac{Y}{Z}\big) = \mathcal{O}(Z^{-d})$, negative exponents are prevented by transforming time $t$ into $\tau$, where $\frac{\dif t}{\dif \tau}=Z^{d-1}$. Additionally, evaluating the obtained differential equations for $\tau$ at $Z=0$ yields the dynamics on the equator
\begin{equation}\label{eq:dynamicsEquator}
	\begin{aligned}
		\dot{X} = \frac{\dif X}{\dif \tau} =& -Y\left(XQ_d(X,Y)-YP_d(X,Y)\right) \, \, \, \vspace*{1mm} \\
		\dot{Y} = \frac{\dif Y}{\dif \tau} =& X\left(XQ_d(X,Y)-YP_d(X,Y)\right),
	\end{aligned}
\end{equation}
where  $P_d(X,Y)$ and $Q_d(X,Y)$ are the terms with exactly degree $d$ in $P$ and $Q$. Thus, fixed points at infinity must satisfy the following equation
\begin{equation*}
	XQ_d(X,Y)-YP_d(X,Y) = 0.
\end{equation*}

Since this equation is homogeneous of degree $d+1$, an equilibrium $(X,Y,0)$ induces the antipodal point $(-X,-Y,0)$ to be a fixed point as well. The antipodal point is the same type of fixed point. However, the direction of the flow reverses if the natural number $d$ is even.

To characterise the type of an equilibrium $(X,Y,0)$ at infinity, it can be useful to transform the system in the neighbourhood of the equilibrium to the tangent plane at this fixed point.
For instance, if $X\neq 0$, the mapping
\[ \beta = \frac{Y}{X}, \gamma = \frac{Z}{X}\]
projects $(X,Y,Z)$ onto the $(\beta,\gamma)${\hyp}plane and corresponding dynamics with respect to a rescaled time $\dif s = X^{d-1} \dif \tau$
\begin{equation}\label{eq:dynamicsBetaGamma}
	\begin{array}{lll}
		\dot{\beta} &= \frac{\dif \beta}{\dif s} =&\gamma^d Q\left(\frac{1}{\gamma},\frac{\beta}{\gamma}\right)-\beta \gamma^d P\left(\frac{1}{\gamma},\frac{\beta}{\gamma}\right) \, \, \,  \\
		\dot{\gamma} &= \frac{\dif \gamma}{\dif s} =&  -\gamma^{d+1} P\left(\frac{1}{\gamma},\frac{\beta}{\gamma}\right).
	\end{array}
\end{equation}

A fixed point $(\beta_0, 0)$ in this new system \eqref{eq:dynamicsBetaGamma} corresponds to a fixed point $\pm \left(\frac{1}{\sqrt{1+\beta_0^2}},\frac{\beta_0}{\sqrt{1+\beta_0^2}},0\right)$ on the Poincar{\'e} sphere.

\paragraph{Compactification}
Finally, a third possibility for the compactification might be the following approach (cf. \cite{Gingold2013}), where a point $\bar{x} \in \R^2$ is mapped to the point 
\begin{equation*}
	u = T(\bar{x}) := \frac{2\bar{x}}{1+\sqrt{1+4\mynorm{\bar{x}}}}
\end{equation*}
on the unit ball $U:=U_1(0) = \left\{u \in \R^2: \mynorm{u} <1  \right\}$. Its inverse mapping is given by 
\begin{equation*}
	\bar{x} = \frac{u}{1-\mynorm{u}^2}.
\end{equation*}
This mapping can be continued to the boundary $\partial U$, which is mapped to a set of points $p_\infty$ at infinity that differ in their \enquote{direction} $p$ at infinity. Formally, it is the set 
\begin{equation*}
	\mathrm{ID} := \left\{p_\infty \, : \, \mynorm{p} = 1 \right\},
\end{equation*}
where the following Definition~\ref{def:InfConvergence} clarifies what is meant by $p_\infty$ and its corresponding direction $p$ to infinity.
\begin{definition}[cf. \cite{Gingold2013}]\label{def:InfConvergence}
	A continuous function $y:(t_\mathrm{min},t_\mathrm{max}) \ra \R^n$ \emph{diverges in direction $p \in \partial U$ to infinity or diverges to $p_\infty$}, in notation $\lim\limits_{t \ra t_\mathrm{max}-} y(t) = p_\infty$, iff 
	\begin{align*}
		\lim\limits_{t\ra t_\mathrm{max}-} \mynorm{y(t)} &= \infty \quad \mathrm{and}\\
		\lim\limits_{t \ra t_\mathrm{max}-} \frac{y(t)}{\mynorm{y(t)}} &= p.
	\end{align*}
\end{definition}
Thus $T$ maps $\bar{\R}^2 := \R^2 \cup \mathrm{ID}$ bijective onto $\bar{U} = U \cup \partial U$.
\begin{remark}
	$\bar{\R}^2$ can be equipped with the chordal metric in order to become a complete metric space. See \cite{Gingold2013}.
\end{remark}

Compactification of the dynamical system \eqref{eq:2DPolySystem} with mapping $T$ yields
\begin{equation*}
    u' = \mymatrix{u_1'\\u_2'}= \frac{(1+\mynorm{u}^2)I-2uu\tp}{(1+\mynorm{u}^2){(1-\mynorm{u}^2)}^{-1}}\mymatrix{P\left(\frac{u_1}{1-\mynorm{u}^2},\frac{u_2}{1-\mynorm{u}^2}\right) \\Q\left(\frac{u_1}{1-\mynorm{u}^2},\frac{u_2}{1-\mynorm{u}^2}\right)}
\end{equation*}
Again transforming the time $t$ to $\tau$ with $\dif t = (1+\mynorm{u}^2){(1-\mynorm{u}^2)}^{d-1} \dif \tau$ results in the dynamical system
\begin{equation}\label{eq:2DCompact}
	\qquad	\dot{u} = \frac{\dif u}{\dif \tau} = {(1-\mynorm{u}^2)}^{d}\left[(1+\mynorm{u}^2)\mymatrix{P(\cdot,\cdot) \\Q(\cdot,\cdot)} - 2\left(u\tp \mymatrix{P(\cdot,\cdot) \\Q(\cdot,\cdot)}\right) \cdot u\right],
\end{equation}
where $P$ and $Q$ are evaluated at $\left(\frac{u_1}{1-\mynorm{u}^2},\frac{u_2}{1-\mynorm{u}^2}\right)$.
Finally, for $u \in \partial U$, we obtain
\begin{equation*}
	\begin{aligned}\label{eq:2DCompactInf}
	\dot{u_1} 	&= 2 P_d(u_1,u_2) - 2\left( u_1 P_d(u_1,u_2) + u_2Q_d(u_1,u_2) \right) u_1\\
	& = 2(1-u_1^2)P_d(u_1,u_2) - 2u_2u_2 Q_d(u_1,u_2)\\
	\dot{u_2} 	&= 2 Q_d(u_1,u_2) - 2\left( u_1 P_d(u_1,u_2) + u_2Q_d(u_1,u_2) \right) u_2 \\
	& =  -2u_1u_2 P_d(u_1,u_2) + 2(1-u_2^2)Q_d(u_1,u_2),
	\end{aligned}
\end{equation*}
which is twice the right{\hyp}hand side of \eqref{eq:dynamicsEquator}. So the compactification via mapping $T$ is very similar to the transformations to the Poincar{\'e} sphere and subsequently projection to the plane containing the equator of the sphere.

\subsection{Definition of Critical Points at Infinity}
Critical points at infinity may be introduced with respect to the Poincar\'e sphere as
\begin{definition}[Critical Point at Infinity]
	A \emph{critical point at infinity} for the system \eqref{eq:2DPolySystem} is a point $p \in \partial U$  such that
	\begin{equation*}
		p_1 Q_d(p_1,p_2) - p_2 P_d(p_1,p_2) = 0.
	\end{equation*}
\end{definition}

\begin{proposition}\label{prop:InfPointCrit}
	Let $p$ be in $\partial U$. Then the following statements are equivalent
	\be[(a)]
		\item\label{prop:Text}$p$ is a critical point at infinity for the system \eqref{eq:2DPolySystem},
		\item\label{prop:Def}$p_1 Q_d(p_1,p_2) - p_2 P_d(p_1,p_2) = 0$,
		\item\label{prop:LinDep}There is a real number $\alpha \in \R$ such that $\mymatrix{P_d(p_1,p_2)\\ Q_d(p_1,p_2)} = \alpha \mymatrix{ p_1\\ p_2}$,
		\item\label{prop:Compact}$\mymatrix{P_d(p_1,p_2)\\ Q_d(p_1,p_2)} =  \mymatrix{p_1\\ p_2}\tp 	\mymatrix{P_d(p_1,p_2)\\ Q_d(p_1,p_2)} \mymatrix{p_1\\ p_2}$ --- this statement is used for definition of critical points at infinity in \cite{Gingold2013}.
	\ee
\end{proposition}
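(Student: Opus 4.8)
The plan is to run the four conditions around a cycle, noting first that the equivalence (a) $\Leftrightarrow$ (b) is purely a matter of unwinding the definition of a critical point at infinity just given, which is literally condition (b). So the real content is the chain (b) $\Leftrightarrow$ (c) $\Leftrightarrow$ (d), and this is planar linear algebra organised around the normalisation $\mynorm{p}=1$, i.e.\ $p_1^2+p_2^2=1$ and in particular $p\neq 0$.

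For (b) $\Leftrightarrow$ (c) I would observe that $p_1 Q_d(p_1,p_2)-p_2 P_d(p_1,p_2)$ is exactly the $2\times 2$ determinant of the matrix with columns $(p_1,p_2)\tp$ and $(P_d(p_1,p_2),Q_d(p_1,p_2))\tp$, so (b) holds precisely when these two vectors of $\R^2$ are linearly dependent. Since $p\in\partial U$ is nonzero, dependence is equivalent to the existence of $\alpha\in\R$ with $(P_d,Q_d)\tp=\alpha\,(p_1,p_2)\tp$, which is (c); the converse (c) $\Rightarrow$ (b) is the one-line computation $p_1 Q_d - p_2 P_d=\alpha(p_1 p_2-p_2 p_1)=0$.

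For (c) $\Leftrightarrow$ (d), starting from (c) I would left-multiply by the row vector $(p_1,p_2)$ and use $p_1^2+p_2^2=1$ to pin down the scalar, obtaining $\alpha=p_1 P_d+p_2 Q_d=(p_1,p_2)(P_d,Q_d)\tp$; substituting this value of $\alpha$ back into (c) produces exactly the identity in (d). Conversely, (d) already exhibits $(P_d,Q_d)\tp$ as a scalar multiple of $(p_1,p_2)\tp$, the scalar being $(p_1,p_2)(P_d,Q_d)\tp$, and so is a special case of (c). Geometrically (d) says that the top-degree part of the vector field at $p$ coincides with its orthogonal projection onto the radial line $\R p$, i.e.\ it is radial at $p$, which is the coordinate-free reformulation used in \cite{Gingold2013}.

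I do not expect a genuine obstacle; the proposition is elementary. The only point requiring care is the bookkeeping of the hypothesis $\mynorm{p}=1$: it is used once to guarantee $p\neq 0$ (so that linear dependence can be expressed by scaling $p$, with the free scalar falling on the vector field rather than on $p$) and once more to identify the projection coefficient in (d) with the scalar $\alpha$ of (c) without a stray factor $\mynorm{p}^2$. Chaining (a) $\Leftrightarrow$ (b) $\Leftrightarrow$ (c) $\Leftrightarrow$ (d) then completes the proof.
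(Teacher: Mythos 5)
Your proof is correct and follows essentially the same route as the paper: (a)\,$\Leftrightarrow$\,(b) by definition, (b)\,$\Leftrightarrow$\,(c) via the $2\times 2$ determinant / linear-dependence observation using $p\neq 0$, and (c)\,$\Leftrightarrow$\,(d) via the normalisation $p^{\mathsf T}p=1$. The only cosmetic difference is in the last step, where the paper phrases (d) as the eigenvalue-$1$ eigenspace problem for the rank-one matrix $pp^{\mathsf T}$ and identifies that eigenspace as $\{\alpha p:\alpha\in\mathbb{R}\}$, whereas you extract the scalar directly by left-multiplying with $p^{\mathsf T}$; these are the same linear-algebra fact in two dialects.
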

\begin{proof}
	\ref{prop:Text} and \ref{prop:Def} are equivalent by definition. \ref{prop:Def} is equivalent to \[\det\mymatrix[rr]{p_1 & P_d(p_1,p_2)\\ p_2 & Q_d(p_1,p_2)} = 0.\]
	This holds exactly when \ref{prop:LinDep} holds. \ref{prop:Compact} is an equation of the form $y=p\tp yp = p p\tp y$, which is an eigenvalue problem of the matrix $pp\tp$ of rank 1 with eigenvalue $1$. Since $p = p p\tp p$, the vector $p$ lies in the image of the matrix  $p p\tp$. Now it follows
	\begin{equation*}
		\left\{y \in \R^2 : y = p\tp y p \right\} = \left\{\alpha p : \alpha \in \R \right\}.
	\end{equation*}
	Therefore, \ref{prop:LinDep} and \ref{prop:Compact} are equivalent.
\end{proof}

\subsection{Heteroclinic Orbits as Separatrix Generators}
For vector fields $f$ arising in the context of two{\hyp}time{\hyp}scale systems, so called \emph{slow invariant manifolds (SIMs)} are special trajectories sharing a phase portrait similar to separatrices. In regions around those SIMs, other trajectories bundle onto these SIMs on the fast time scale. Once trajectories are close to them, they stay close to them. 

Al{\hyp}Khateeb et al.~\cite{Al-Khateeb2009} conjecture and make plausible that the one{\hyp}dimensional SIMs are heteroclinic orbits that start in critical points (finite or infinite) of saddle type with exactly one unstable direction (based on \cite{Davis1999}). Therefore, it seems natural to investigate whether separatrices are heteroclinic, homoclinic orbits or none of them. 

In the following we restrict ourselves to complex polynomial differential systems.
\begin{lemma}\label{lem:complexPoly}
	Let $f(z) = \sum\limits_{k=0}^{d} \alpha_k z^k$ be a complex polynomial of degree $d\ge 1$, i.e., $\alpha_d \neq 0$ and $\alpha_k = a_k + \rmi b_k \in \C, \alpha_k, \beta_k \in \R$ for all $k=0,1,\ldots, d$. Then, the differential equation $z' = f(z)$ can be identified in $\R^2$ by the differential system
	\begin{equation}\label{eq:2DholPolySystem}
		\begin{aligned}
			x' &= P(x,y) = \sum\limits_{k=0}^{d} P_k(x,y) := \sum\limits_{k=0}^{d} a_k \xi_k(x,y) - b_k\eta_k(x,y)\\
			y' &= Q(x,y) = \sum\limits_{k=0}^{d} Q_k(x,y) := \sum\limits_{k=0}^{d} a_k \eta_k(x,y) + b_k\xi_k(x,y),
		\end{aligned}
	\end{equation}
	where 
	\begin{equation*}
		\xi_k(x,y):= \sum_{\ell=0}^{\lfloor k/2\rfloor}{\binom{k}{2\ell}}(-1)^\ell x^{k-2\ell}y^{2\ell}, \quad \eta_k(x,y):= \sum_{\ell=0}^{\lfloor (k-1)/2\rfloor}{\binom{k}{2\ell +1}}{(-1)}^\ell x^{k-2\ell-1}y^{2\ell+1}.
	\end{equation*}
	Furthermore, if the equator of the Poincar{\'e} sphere consists of infinitely many critical points, then $d$ is an odd number and $\alpha_d = a_d $ is a real number.
\end{lemma}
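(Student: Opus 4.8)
The first (\emph{identification}) part is a direct computation. Writing $z=x+\rmi y$ and applying the binomial theorem, $z^k=(x+\rmi y)^k=\sum_{j=0}^{k}\binom{k}{j}x^{k-j}(\rmi y)^{j}$; splitting this sum according to the parity of $j$ and using $\rmi^{2\ell}=(-1)^\ell$, $\rmi^{2\ell+1}=\rmi(-1)^\ell$ gives exactly $z^k=\xi_k(x,y)+\rmi\,\eta_k(x,y)$ with the stated $\xi_k,\eta_k$. Substituting into $f(z)=\sum_{k=0}^d(a_k+\rmi b_k)\big(\xi_k(x,y)+\rmi\,\eta_k(x,y)\big)$ and reading off real and imaginary parts yields $x'=\Re f(z)=\sum_k\big(a_k\xi_k-b_k\eta_k\big)=P(x,y)$ and $y'=\Im f(z)=\sum_k\big(a_k\eta_k+b_k\xi_k\big)=Q(x,y)$, with $P_k,Q_k$ homogeneous of degree $k$ because $\xi_k,\eta_k$ are. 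There is no obstacle here.

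For the \enquote{Furthermore} part the plan is to reduce the hypothesis to an elementary statement about holomorphic monomials. By \eqref{eq:dynamicsEquator}, a point $(X,Y,0)$ on the equator of the Poincar{\'e} sphere is a critical point precisely when $X Q_d(X,Y)-Y P_d(X,Y)=0$, and the left-hand side is a homogeneous polynomial of degree $d+1$ in $(X,Y)$. If it vanishes at infinitely many points of the unit circle $X^2+Y^2=1$, it must vanish identically: a nonzero homogeneous polynomial in two real variables factors into finitely many real linear forms and irreducible (definite) quadratic forms, so its real zero locus is a finite union of lines through the origin and meets the circle in finitely many points (equivalently, its restriction to the circle is a trigonometric polynomial of degree $\le d+1$, hence has at most $2(d+1)$ zeros unless it is the zero polynomial). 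Thus \enquote{infinitely many critical points on the equator} forces $X Q_d-Y P_d\equiv 0$ identically.

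Now identify $(X,Y)$ with $z=X+\rmi Y$. From the identification part, $P_d+\rmi Q_d$ is the degree-$d$ homogeneous part of $f(x+\rmi y)$, i.e.\ $P_d+\rmi Q_d=\alpha_d z^{d}$, and a short computation gives
\[
 X Q_d-Y P_d \;=\; \Im\!\big(\overline{z}\,(P_d+\rmi Q_d)\big)\;=\;\Im\!\big(\alpha_d\,\overline{z}\,z^{d}\big)\;=\;\Im\!\big(\alpha_d\,|z|^2 z^{d-1}\big)\;=\;(X^2+Y^2)\,\Im\!\big(\alpha_d z^{d-1}\big).
\]
Hence $X Q_d-Y P_d\equiv 0$ is equivalent to $\Im(\alpha_d z^{d-1})\equiv 0$. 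Writing $\alpha_d=|\alpha_d|\,\mathrm{e}^{\rmi\varphi}$ with $|\alpha_d|\neq 0$ and $z=r\,\mathrm{e}^{\rmi\theta}$, one has $\Im(\alpha_d z^{d-1})=|\alpha_d|\,r^{d-1}\sin\!\big((d-1)\theta+\varphi\big)$, which is identically zero only if $d-1=0$. Therefore $d=1$ — in particular $d$ is odd — and then $\Im(\alpha_d z^{d-1})=\Im(\alpha_1)=b_1$, so the forced condition $b_1=0$ gives $\alpha_d=\alpha_1=a_1\in\R$. (Equivalently, avoiding complex notation: $\Im(\alpha_d z^{d-1})=a_d\eta_{d-1}(x,y)+b_d\xi_{d-1}(x,y)$, and for $d\ge 2$ the homogeneous polynomials $\xi_{d-1},\eta_{d-1}$ are nonzero and linearly independent, so this vanishes identically only if $a_d=b_d=0$, contradicting $\alpha_d\neq 0$; the argument thus actually yields the slightly sharper conclusion $d=1$.)

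The main — and essentially only — point requiring care is upgrading \enquote{infinitely many critical points on the equator} to \enquote{$X Q_d-Y P_d$ vanishes identically} and then spotting the clean identity $X Q_d-Y P_d=(X^2+Y^2)\,\Im(\alpha_d z^{d-1})$, which collapses the whole question to the triviality that a nonconstant holomorphic monomial cannot have identically vanishing imaginary part. Once that identity is in hand the conclusion is immediate; everything else is routine algebra.
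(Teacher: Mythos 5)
Your proof is correct, self-contained, and in fact establishes a sharper conclusion than the lemma asserts. The paper's own argument for the \enquote{Furthermore} part goes through the polar-coordinate form \eqref{eq:2DPolarCoord}: it evaluates the transformed system at $\theta=0$ and $\theta=\pi/2$ to estimate the quantity $\hat{k}=I-J$, tabulates its possible values according to the parity of $d$ and the vanishing of $a_d,b_d$, and then invokes Theorem~2 of Roeder~\cite{Roeder2003} as an external black box to relate $\hat{k}\ge 2$ to the presence of infinitely many equilibria on the equator. You instead work directly with the defining equation of equator equilibria, $XQ_d-YP_d=0$, observe that a nonzero homogeneous polynomial of degree $d+1$ meets the unit circle in only finitely many points, and reduce the identical vanishing of $XQ_d-YP_d$ to the clean identity $XQ_d-YP_d=(X^2+Y^2)\,\Im(\alpha_d z^{d-1})$, which follows from $P_d+\rmi Q_d=\alpha_d z^d$. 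This buys two things: it avoids the external reference entirely, and, as you point out, it forces $d=1$ with $\alpha_1=a_1\in\R$, which is strictly stronger than \enquote{$d$ odd and $\alpha_d\in\R$.} The lemma's conclusion is thus correct but not tight, and your argument shows the hypothesis of an all-critical equator can only occur for an affine vector field $f(z)=a_1 z+\alpha_0$ with $a_1$ real — a refinement worth recording.
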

\begin{proof}
	Let $f(z) = \sum\limits_{k=0}^{d} \alpha_k z^k$ be a complex polynomial of degree $d\ge1$. Because of 
	\[ {(x+iy)}^k = \xi_k(x,y) + \rmi \, \eta_k(x,y), \]
	the transformation \eqref{eq:2DholPolySystem} is clear.  
	We consider \eqref{eq:2DPolarCoord} and determine $I\in \left\{1,2,\ldots,d\right\}$ and $J\in \left\{-1,0,\ldots, d-1\right\}$ in this special case. First, notice that $\xi_k(1,0)=1$ and $\eta_k(1,0) = 0$. If $k$ is an even number, then $\xi_k(0,1)=(-1)^{\frac{k}{2}}$ and $\eta_k(0,1)=0$. If $k$ is an odd number, then $\xi_k(0,1)=0$ and $\eta_k(0,1)=(-1)^{\frac{k-1}{2}}$. 
	Inserting $\theta = 0$ in \eqref{eq:2DPolarCoord}, yields 
	\begin{eqnarray*}
		r'=	&  \sum_{k=0}^{d} r^k P_k(1,0) = \sum_{k=0}^{d} r^k a_k \\
		\theta'= & \frac{1}{r}\sum_{k=0}^{r} r^k  Q_k(1,0) = \frac{1}{r}\sum_{k=0}^{r} r^k  b_k.
	\end{eqnarray*}
	If $a_d\neq 0$ then $I=d$. If $b_d \neq 0$, $J=d-1$.
	
	Inserting $\theta = \pi/2$ in \eqref{eq:2DPolarCoord}, yields 
	\begin{eqnarray*}
		r'=	& \sum_{k=0}^{d} r^k Q_k(0,1) = \sum_{k=0}^{d} r^k (a_k\eta_k(0,1)+b_k\xi_k(0,1))\\
		\theta'= & -\frac{1}{r}\sum_{k=0}^{d} r^k  P_k(0,1) = -\frac{1}{r}\sum_{k=0}^{d}  r^k  (a_k\xi_k(0,1)-b_k\eta_k(0,1))
	\end{eqnarray*}
	If $d$ is even, we obtain 
	\begin{eqnarray*}
		r'=	& r^d b_d{(-1)}^{\frac{d}{2}}  + \sum_{k=0}^{d-1} r^k (a_k\eta_k(0,1)+b_k\xi_k(0,1))\\
		\theta'= &  -\frac{1}{r}\left(r^d a_d{(-1)}^{\frac{d}{2}} + \sum_{k=0}^{d-1}  r^k  (a_k\xi_k(0,1)-b_k\eta_k(0,1)) \right)
	\end{eqnarray*}
		
	\Cref{tab:k=I-J} concludes which numerical values $\hat{k}=I-J$ may assign.
	\begin{table}[htp]
		\caption{values for $\hat{k}=I-J$ with respect to $d$ and $\alpha_d$}
		\label{tab:k=I-J}
		\begin{center}
			\def\arraystretch{1.5}
			\begin{tabular}{l|lll}
				& $a_d=0$ & $b_d=0$  & $a_d \cdot b_d \neq 0$ \\ \hline
				$d$ even & $\hat{k}=d-(d-1)=1$ & $\hat{k}=d-(d-1)=1$ & $\hat{k}=d-(d-1)=1$\\
				$d$ odd & $\hat{k}=I-(d-1)\le 1$ & $\hat{k}=d-J\ge1$ & $\hat{k}=d-(d-1)=1$
			\end{tabular}
		\end{center}
	\end{table}
	This shows, that $\hat{k}\ge 2$ only if $d$ is odd and $b_d = 0$. The statement now follows applying Theorem 2 in \cite{Roeder2003}.
\end{proof}

\begin{theorem}[cf. Theorem 18 of \cite{Gingold2013}]\label{thm:ExistSep}
	Let $p \in \partial U$ be a critical point at infinity of system \eqref{eq:2DholPolySystem} with polynomial degree $d\ge2$. According to Proposition~\ref{prop:InfPointCrit} there is an $\alpha \in \R$ such that $\mymatrix{P_d(p_1,p_2)\\ Q_d(p_1,p_2)} = \alpha \mymatrix{p_1\\ p_2}$. All critical points at infinity are generic, i.e., $\alpha \neq 0$. If $\alpha > 0 \, (\alpha < 0)$, there is a separatrix that ends (starts) at the critical point $p$ at infinity.
	
\end{theorem}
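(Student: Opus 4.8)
The plan is to establish the two assertions separately — genericity ($\alpha\neq0$) and the existence of a one-sided separatrix — and to handle the latter by a hyperbolic-saddle analysis in the chart \eqref{eq:dynamicsBetaGamma}. I would treat genericity first, since it also fixes a convenient normalisation. Identifying $\R^2\simeq\C$ and writing $q:=p_1+\rmi p_2$, a point on the unit circle, Lemma~\ref{lem:complexPoly} gives $P_d+\rmi Q_d=\alpha_d(x+\rmi y)^d$; hence $p_1Q_d(p)-p_2P_d(p)=\Im\big(\bar q\,\alpha_d q^d\big)=\Im\big(\alpha_d q^{d-1}\big)$ because $|q|=1$, and the real scalar of Proposition~\ref{prop:InfPointCrit} is $\alpha=\alpha_d q^{d-1}$, obtained from $\alpha_d q^d=\alpha q$. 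Since $\deg f=d$ exactly we have $\alpha_d\neq0$, while $|q|=1$ gives $q^{d-1}\neq0$, so $\alpha\neq0$; this is where $d\ge2$ enters, since for $d=1$ the condition $\Im\alpha_1=0$ would instead either exclude every direction or render the whole line at infinity critical. Now I would apply the rotation $z=e^{\rmi\arg q}\,w$: it is a phase-space diffeomorphism leaving the time parameter unchanged, hence it maps positive (negative) separatrices to positive (negative) separatrices and preserves the escape direction, and it turns \eqref{eq:ode} into a degree-$d$ polynomial flow whose leading coefficient is $\alpha_d q^{d-1}=\alpha\in\R\setminus\{0\}$ and whose critical point at infinity in question lies in the direction $(1,0)$. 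Relabelling, we may assume the flow \eqref{eq:2DholPolySystem} itself has $\alpha_d=\alpha\in\R$ and its critical point at infinity $p=(1,0)$, so the chart \eqref{eq:dynamicsBetaGamma} is available.

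In the coordinates of \eqref{eq:dynamicsBetaGamma} the critical point at infinity is the origin $(\beta,\gamma)=(0,0)$. From $P\big(\tfrac1\gamma,\tfrac\beta\gamma\big)+\rmi Q\big(\tfrac1\gamma,\tfrac\beta\gamma\big)=f\big(\tfrac{1+\rmi\beta}{\gamma}\big)=\alpha(1+\rmi\beta)^d\gamma^{-d}+\mathcal{O}\big(\gamma^{-(d-1)}\big)$ I would expand the right-hand side of \eqref{eq:dynamicsBetaGamma} about the origin. Since $\dot\gamma$ carries an overall factor $\gamma$, the Jacobian there is upper triangular with diagonal entries $\partial_\beta\dot\beta=\alpha(d-1)$ and $\partial_\gamma\dot\gamma=-\alpha$. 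For $d\ge2$ and $\alpha\neq0$ the eigenvalues $\alpha(d-1)$ and $-\alpha$ have opposite signs, so the origin is a hyperbolic saddle; the $\alpha(d-1)$-eigenvector points along $\{\gamma=0\}$ — the orbit running along the line at infinity — whereas the $(-\alpha)$-eigenvector has non-zero $\gamma$-component, so the one-dimensional invariant manifold tangent to it genuinely enters $\{\gamma>0\}$ and therefore contains a true orbit $\Gamma$ of the finite planar flow that limits onto $p$ on the Poincar{\'e} sphere. If $\alpha>0$ this is the stable manifold, so $\Gamma\to p$ as $s\to+\infty$; if $\alpha<0$ it is the unstable manifold and $\Gamma\to p$ as $s\to-\infty$.

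Finally I would translate the $s$-asymptotics of $\Gamma$ into the original time $t$. Combining $\dif t=Z^{d-1}\dif\tau$ with $\dif s=X^{d-1}\dif\tau$ yields $\dif t=\gamma^{\,d-1}\dif s$ along \eqref{eq:dynamicsBetaGamma}, and $s$ and $t$ have the same orientation near $p$. On the invariant manifold $\gamma(s)$ decays exponentially as $s\to+\infty$ (resp.\ $s\to-\infty$), and since $d-1\ge1$ the integral $\int\gamma^{\,d-1}\dif s$ converges, so $t$ tends to a finite value $t^\ast$ (resp.\ $t_\ast$). Thus $\Gamma$, read in original time, escapes to infinity in the direction $p$ at the finite future time $t^\ast$ when $\alpha>0$, and at the finite past time $t_\ast$ when $\alpha<0$; its maximal forward (resp.\ backward) interval of existence is finite, so by Definition~\ref{def:sep} it is a positive (resp.\ negative) separatrix and by Definition~\ref{def:InfConvergence} it ends (resp.\ starts) at $p$. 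The step I expect to be the main obstacle is the linearisation of \eqref{eq:dynamicsBetaGamma}: getting the two eigenvalues right and, above all, verifying that the $(-\alpha)$-eigendirection is transverse to the line at infinity — so that the invariant manifold truly leaves infinity rather than lying in the equator — together with the three-way bookkeeping between $t$, $\tau$ and $s$ and the convergence of $\int\gamma^{\,d-1}\dif s$.
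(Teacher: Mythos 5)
Your proof is correct, and it takes a genuinely different route from the paper's. For genericity, the paper writes out $P_d(p)=Q_d(p)=0$ as a linear system in $(\xi_d(p),\eta_d(p))$ with regular matrix $\bigl(\begin{smallmatrix}a_d&-b_d\\ b_d&a_d\end{smallmatrix}\bigr)$, concluding $(p_1+\rmi p_2)^d=0$, a contradiction; your identity $\alpha=\alpha_d q^{d-1}$ is a slicker, more quantitative version of the same fact and is nowhere stated explicitly in the paper. For the existence of a separatrix, the paper does \emph{not} work in the chart \eqref{eq:dynamicsBetaGamma}: it cites Theorem~18 / Theorem~14 of Gingold and Solomon to write the compactified system \eqref{eq:2DCompact} near $p$ as $\dd{\tau}(u-p)=A(u-p)+V$ with $V=\mathcal{O}(\mynorm{u-p}^2)$, observes that $-2\alpha$ is an eigenvalue of $A$ with left eigenvector $p$, and then invokes the cited result to conclude that a trajectory reaches $p_\infty$ in finite time. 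You instead normalise by a rotation (legitimate, and preserving separatrices since the time parameter is untouched), linearise \eqref{eq:dynamicsBetaGamma} at the origin, obtain the hyperbolic saddle with eigenvalues $\alpha(d-1)$ and $-\alpha$, verify that the $(-\alpha)$-eigendirection is transverse to $\{\gamma=0\}$, and then explicitly trace the three time scales $t,\tau,s$ to show $\int\gamma^{d-1}\,\dif s<\infty$. Your version is self-contained and more transparent about \emph{why} the escape time is finite, at the cost of the rotation bookkeeping and the Jacobian computation; the paper's version is shorter but leans entirely on the external reference. One small misattribution: your argument for $\alpha\ne0$ works for $d=1$ just as well (for $f(z)=z$, $\alpha=\alpha_1\ne0$), so $d\ge2$ is \emph{not} what rescues genericity; it is needed where you actually use it, namely to make $\alpha(d-1)$ nonzero (hyperbolicity of the saddle) and to make $\int\gamma^{d-1}\dif s$ converge. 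This matches the paper's Remark after Corollary~\ref{cor:bd0Flow}, which shows $f(z)=z$ has generic critical points at infinity but no separatrix.
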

\begin{proof}
	Let $p$ be a critical point at infinity with corresponding \(\alpha \in \R\). At first, we show that $\alpha \neq 0$. Assume that $\alpha = 0$. Thus it holds 
	\begin{align*}
		0 &= P_d(p_1,p_2) = a_d \xi_d(p_1,p_2) - b_d\eta_d(p_1,p_2)\\
		0 &= Q_d(p_1,p_2) = a_d \eta_d(p_1,p_2) + b_d\xi_d(p_1,p_2)
	\end{align*}
	which is equivalent to \[\mymatrix{0\\0} = \mymatrix[rr]{a_d & -b_d\\b_d & a_d} \mymatrix{\xi_d(p_1,p_2)\\ \eta_d(p_1,p_2)}.\]
	The matrix in this equation is regular. Otherwise $\alpha_d=a_d+\rmi b_d$ would be zero and using Lemma~\ref{lem:complexPoly}  yields $(p_1+\rmi p_2)^d = \xi_d(p_1,p_2) + \rmi \eta_d(p_1,p_2)= 0$. This contradicts $p\in \partial U$. The assumption $\alpha = 0$ must have been wrong.
		
	Because of Proposition~\ref{prop:InfPointCrit}, we can apply Theorem 18 resp. Theorem 14 of \cite{Gingold2013}. There it is shown that the vector field is incomplete. More precisely, the authors show that \eqref{eq:2DCompact} is equivalent to 
	\begin{equation*}
		\frac{\dif \,(u-p)}{\dif \tau} = A(u-p) + V.
	\end{equation*}
	In this equation, $A$ is the matrix \[2(I-pp\tp)\left[J_{d}(p)-2\mymatrix{P_{d-1}(p_1,p_2)\\ Q_{d-1}(p_1,p_2)}p\tp\right] -2p\tp\mymatrix{P_{d}(p_1,p_2)\\ Q_{d}(p_1,p_2)}I,\] 
	where $J_{d}$ denotes the Jacobian of the vector field $\left(\begin{array}{r}  P_{d}(p_1,p_2)\\ Q_{d}(p_1,p_2)\end{array}\right)$ and $V$ is a polynomial vector function in the variables $u$, $p$ and $u-p$ such that 
	\begin{equation*}
		V = \mathcal{O}\left(\mynorm{u-p}^2\right) \quad \mathrm{as } \;\; \mynorm{u-p} \ra 0.
	\end{equation*}
	$-2 p\tp\mymatrix{P_{d}(p_1,p_2)\\ Q_{d}(p_1,p_2)}= -2 \alpha$ is an eigenvalue of $A$ for the left eigenvector $p$. W.l.o.g.\ let $\alpha >0$. Then there is at least one trajectory $u^*(t)$ with $\lim\limits_{t\ra t_\mathrm{max}-}u^*(t) = p_\infty$ for a finite value $t_\mathrm{max} < \infty$. Thus, $u^*$ is a separatrix in the sense of Definition \ref{def:sep} that ends at the critical point $p$ at infinity.
\end{proof}

\begin{corollary}\label{cor:bd0Flow}
	Let $z'=f(z) = \sum_{k=0}^{d} \alpha_k z^k$ be a complex, one{\hyp}dimensional, polynomial flow with $\alpha_d = a_d \in \R, d\ge 1$. Then $p=\pm(1,0)$ is a fixed point at infinity and no (limit) cycle at infinity can exist. If $d$ is even, there are only finitely many fixed points at infinity.
\end{corollary}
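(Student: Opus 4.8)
The plan is to extract all three assertions from the explicit equator dynamics \eqref{eq:dynamicsEquator} together with Lemma~\ref{lem:complexPoly}. Write $g(X,Y):=XQ_d(X,Y)-YP_d(X,Y)$, so that on the equator $\{Z=0\}$ the system \eqref{eq:dynamicsEquator} reads $(\dot X,\dot Y)=g(X,Y)\,(-Y,X)$; in particular the equator is invariant, its critical points are exactly the zeros of $g$ on it, and off those zeros the flow runs along the circle. For the first claim I would check $g(1,0)=Q_d(1,0)=0$: from the evaluations in the proof of Lemma~\ref{lem:complexPoly} one has $\xi_d(1,0)=1$ and $\eta_d(1,0)=0$, hence $Q_d(1,0)=a_d\eta_d(1,0)+b_d\xi_d(1,0)=b_d$, and since $\alpha_d=a_d$ is real, $b_d=0$, so $Q_d(1,0)=0$. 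By Proposition~\ref{prop:InfPointCrit} the point $p=(1,0)$ is a critical point at infinity, and --- the defining equation being homogeneous of degree $d+1$ --- so is the antipodal point $-(1,0)$.

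For the absence of a (limit) cycle at infinity, recall that a cycle at infinity means the equator is itself a closed orbit, which by the description above would force $g$ to be nowhere zero on the equator. Since $g(1,0)=0$ this fails, and a circle that contains a fixed point can contain no periodic orbit at all; hence no cycle at infinity exists. Together with Broughan's result that holomorphic flows admit no finite limit cycles \cite{Broughan2003}, no limit cycle exists anywhere.

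Finally, for even $d$: the critical points at infinity are the zeros on $\partial U\cong S^1$ of the homogeneous polynomial $g$ of degree $d+1$. Parametrising the circle by $\theta\mapsto(\cos\theta,\sin\theta)$ turns $g$ into a real trigonometric polynomial of degree at most $d+1$, which either has at most $2(d+1)$ zeros in $[0,2\pi)$ or vanishes identically; equivalently, $g$ splits over $\C$ into $d+1$ linear forms and hence has only finitely many zeros on $S^1$ unless $g\equiv 0$. By Lemma~\ref{lem:complexPoly}, the equator consisting of infinitely many critical points --- i.e.\ $g\equiv 0$ on it --- forces $d$ to be odd; so for even $d$ there are only finitely many critical points at infinity.

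I do not expect a genuine obstacle: the first computation is immediate and the cycle statement is a direct reading of \eqref{eq:dynamicsEquator}. The only point needing care is the last step, where one must invoke Lemma~\ref{lem:complexPoly} (or the linear-factorisation argument) to exclude the degenerate alternative $g\equiv 0$ --- that is, to tie the \enquote{continuum of equilibria at infinity} case to the parity of $d$ --- rather than treating finiteness as automatic.
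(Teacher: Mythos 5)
Your proof is correct, and the three assertions are established by the right means. The paper's own proof is only a three-word citation (Theorem~2 of Roeder, Lemma~\ref{lem:complexPoly}, Theorem~\ref{thm:ExistSep}), so the substance is hidden; what you have done is spell out the content of those citations from first principles: the vanishing $g(1,0)=Q_d(1,0)=b_d=0$ read off from $\alpha_d\in\R$, the antipodal point via homogeneity of $g$, the invariance of the equator under \eqref{eq:dynamicsEquator} (so that a fixed point on it kills any cycle at infinity), and the \enquote{finitely many or all} dichotomy for zeros of the homogeneous polynomial $g$ on $S^1$, resolved by the parity statement of Lemma~\ref{lem:complexPoly}.

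The only route difference worth noting is that the paper additionally invokes Theorem~\ref{thm:ExistSep}, which you do not use. That theorem supplies a separatrix ending or starting at each generic critical point at infinity and thus gives a second way to see that the equator cannot be a periodic orbit; but it also requires $d\ge 2$, whereas the corollary is stated for $d\ge 1$. Your direct argument --- that the equator, being invariant, cannot be a closed orbit once it carries the fixed point $(1,0)$ --- is more elementary and covers $d=1$ uniformly (where in fact $g\equiv 0$ and the whole equator consists of equilibria, so the conclusion is immediate). You were right to flag the exclusion of the degenerate case $g\equiv 0$ as the one step that genuinely needs Lemma~\ref{lem:complexPoly}; treating finiteness as automatic would be the natural place to slip.
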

\begin{proof}
	Theorem 2 in \cite{Roeder2003}, Lemma~\ref{lem:complexPoly} and Theorem~\ref{thm:ExistSep}.
\end{proof}

\begin{remark}
	It is necessary in Theorem~\ref{thm:ExistSep} to require that $d>1$. The example $f(z) = z, P(x,y) = x , Q(x,y) = y$ with solution trajectories $x(t) = x(0)\rme^t, y(t) = y(0)\rme^t$ shows that no separatrices are present. 
\end{remark}

\begin{example}[Global Phase Portrait of \texorpdfstring{$z^2+1$}{z*z+1}]
	We apply the theory of this section to investigate the phase space behaviour of the dynamical system 
	\begin{equation*}
	z' = f(z) = z^2+1; \quad z,f(z) \in \C
	\end{equation*}
	which can be regarded as two-dimensional, real ODE
	\begin{eqnarray*}
		x' =& x^2-y^2+1\\
		y' =& 2xy.
	\end{eqnarray*}

	Its solution is $z(t) = \tan(t+c),$ where the complex constant $c\in \C$ depends on the initial value. \Cref{fig:z2p1} shows the phase portrait of this ODE. 
	\begin{figure}[htp]
		\begin{center}
			\includegraphics[width=0.7\linewidth]{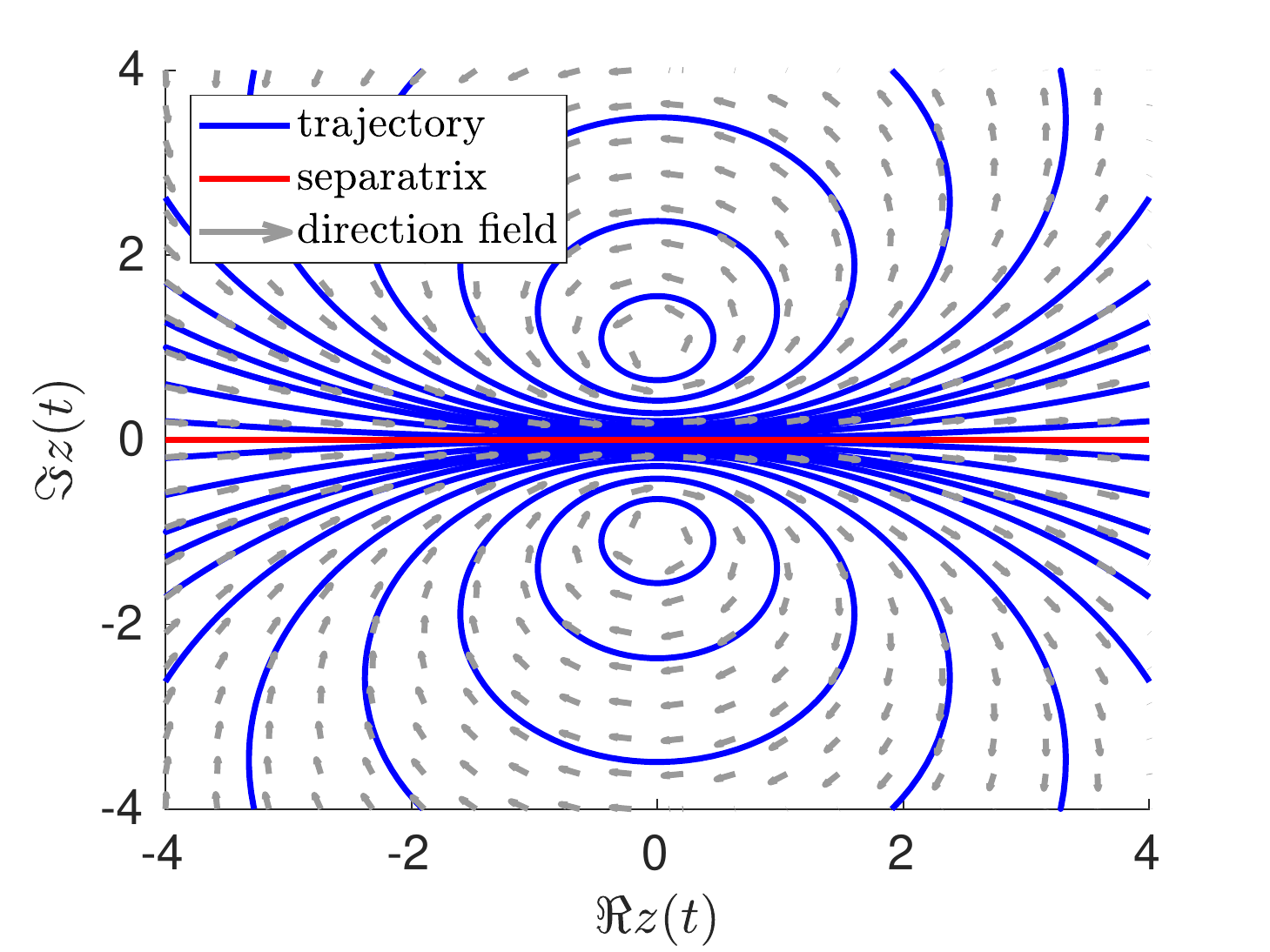}
			\caption{Holomorphic flow of $z'= z^2+1$. \enquote{Ordinary} trajectories in blue, separatrix in red, direction field (= normalized vector field) in grey.}\label{fig:z2p1}
		\end{center}
	\end{figure}

	The finite fixed points are given by $(0,1)$ and $(0,-1)$. The Jacobian
	\begin{equation*}
	\mymatrix[rr]{2x & -2y\\2y & 2x}
	\end{equation*}
	reveals that $(0,1)$ and $(0,-1)$ are centres of different signs. Some periodic orbits around these centres are plotted with orange colour in \cref{fig:z2p1PS}.
	
	Application  of Theorem~\ref{thm:ExistSep} guarantees the existence of a separatrix and according to Corollary~\ref{cor:bd0Flow} $p_{1,2}=\pm(1,0)$ are two (of finitely many) critical points at infinity. 
	Explicit calculation of the critical points at infinity is carried out by solving  
	\begin{equation*}
	0 = XQ_2(X,Y)-YP_2(X,Y) = Y(X^2+Y^2) = Y
	\end{equation*}
	and thus are given by $(X,Y,Z) = \pm (1,0,0)$. These two infinite critical points are depicted as green balls in \cref{fig:z2p1PS} and are the only two equilibria at infinity.
	
	\begin{figure}[ht]
		\centering
		\includegraphics[width=4cm]{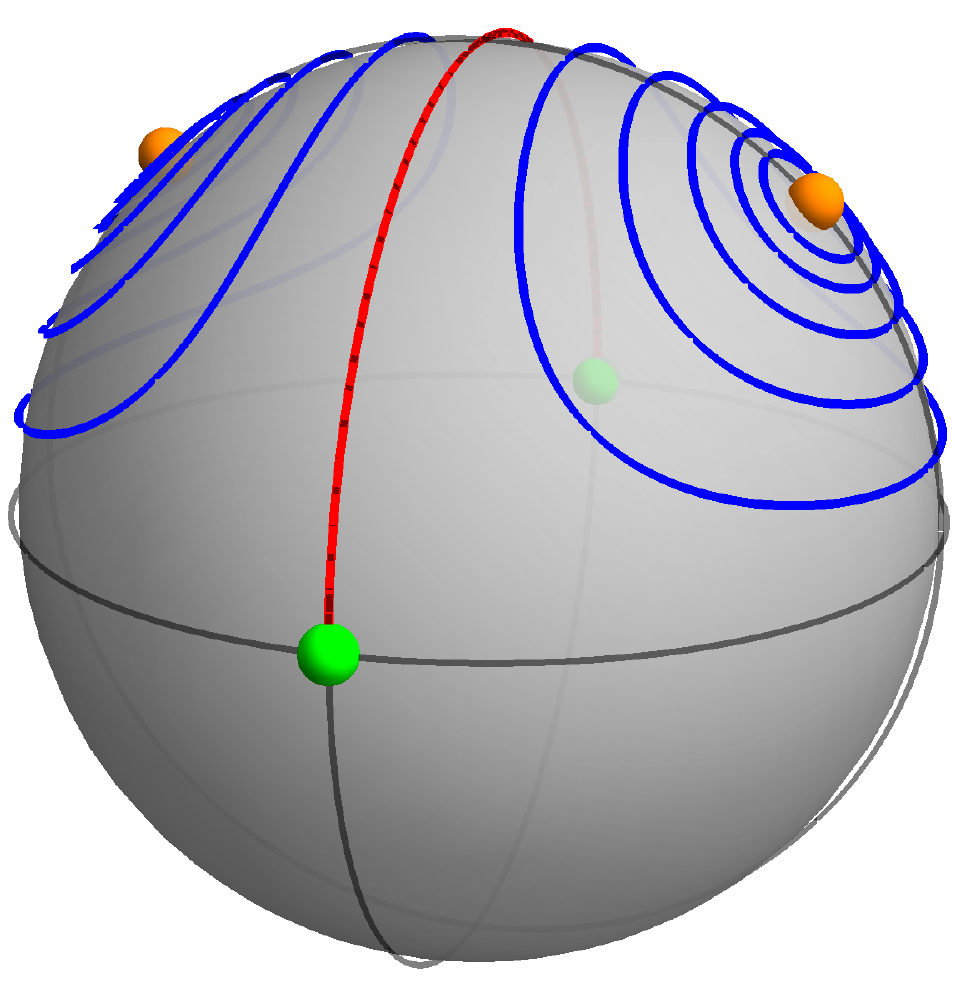}
		\caption{Global phase portrait of $z^2+1$ on the Poincar\'{e} Sphere.}\label{fig:z2p1PS}
	\end{figure}
	
	To determine the type of these equilibria at infinity, the transformed ODE
	\begin{equation*}
	\begin{array}{lll}
	\dot{\beta} &=\gamma^2 Q\left(\frac{1}{\gamma},\frac{\beta}{\gamma}\right)-\beta \gamma^2 P\left(\frac{1}{\gamma},\frac{\beta}{\gamma}\right) &= \beta + \beta^3-\beta \gamma^2  \\
	\dot{\gamma} &= -\gamma^{3} P\left(\frac{1}{\gamma},\frac{\beta}{\gamma}\right) &= -\gamma +\beta^2 \gamma - \gamma^3.
	\end{array}
	\end{equation*}
	is considered.
	The Jacobian evaluated at $(\beta,\gamma) = (0,0)$ is given by
	\begin{equation*}
	    \mymatrix[rr]{1 & 0\\0 & -1}.
	\end{equation*}
	Consequently, the critical point $(X,Y,Z) = (1,0,0)$ is saddle point. The equilibrium $(X,Y,Z) = (-1,0,0)$ is a saddle point with opposite direction of the flow because the degree $d=2$ of the regarded polynomials of the right{\hyp}hand side is an even number.
	For the sake of completeness, we would like to add that 
	\[\mymatrix{P_2(\pm1,0)\\ Q_2(\pm1,0)} = \mymatrix{1\\0} = \pm1 \cdot \mymatrix{\pm1\\0},\]
	i.e., $\alpha = \pm 1$. Thus, we have an incoming separatrix for $(1,0)$ and an outgoing separatrix for $(-1,0)$. 
	
	Next, an initial point for the numerical calculation of separatrices is given below. Obviously, an eigenvector for the stable eigenvalue $-1$ of  $(\beta,\gamma) = (0,0)$ is $e_2 = {(0,1)}\tp$. Perturbation of this fixed point into this direction leads to
	\[ p_\eps = {(0,0)}\tp + \eps e_2 = {(0,\eps)}\tp. \]
	Mapping this point back to the Poincar{\'e} sphere yields
	\[(X_\eps,Y_\eps,Z_\eps) = \left(\frac{1}{\sqrt{1+\eps^2}},0,\frac{\eps}{\sqrt{1+\eps^2}}\right).\] This points results by transformation back to the $(x,y)$ coordinates in the point
	$(x_\eps,y_\eps) = \left(\frac{1}{\eps},0\right)$. Since $y_\eps = 0$, the perturbed point is already on the separatrix and propagation in backward time directly calculates the separatrix numerically.
\end{example}

\begin{example}[Global Phase Portrait of $\cosh(z-1/2)$]
	The  theory of the transformation to the Poincar{\'e} sphere is not applicable for $\cosh$ since it is not a polynomial. However, the global phase portrait of  $z'=\cosh(z-1/2)$ can be described by using the phase portrait of $z' = z^2+1$ because the transformation $w(t)=\exp\big(z(2t)-1/2\big)$ turns every solution trajectory of $z'=\cosh(z-1/2)$ into a solution trajectory of $w'=w^2+1$.  Essentially, the inverse transformation is the logarithm. As a consequence, the global phase portrait of $z' = z^2+1$ is homeomorphic to the phase portrait of $z'=\cosh(z-1/2)$ in each strip $S_k := \left\{z \in \C : 2k\pi \le \Im(z) < 2(k+1)\pi \right\}$ for $k\in \Z$, where the logarithm is bijective.
\end{example}

\section{A Topological View on Separatrices of Periodic Orbits}\label{sec:topology}
In this section we extend the investigations of Broughan~\cite{Broughan2003, Broughan2003a, Broughan2005}. He analyses the neighbourhoods of a centre, elliptic sector at a zero, focus and nodes of holomorphic flows. He was able to show that the boundary components of these neighbourhoods are separatrices in the sense of Definition \ref{def:sep}.
In the following we restrict ourselves to the case of boundary components of two neighboured centres, i.e.\ where the neighbourhoods of two centres have non{\hyp}empty intersection (see \cref{fig:cosh}). We show that the index of trajectories around centres changes by traversing a common separatrix. 
But first, we define the term \emph{transit time}.

\begin{definition}[transit time (cf. Definition 3.2 in \cite{Broughan2003a})]
	Let $\gamma$ be an orbit of the holomorphic flow $z' = f(z)$. For $x,y \in \gamma$ the transit time $\tau(x,y)$ is defined by
	\begin{equation*}
		\tau(x,y) := \int_{x}^{y}\frac{1}{f(z)}\dif z,
	\end{equation*}	
	where the integration path is given by $\gamma$.
\end{definition}
Furthermore, the ball of radius $r>0$ around $z\in \C$ is denoted by $U_r(z)$.

The following Theorem~\ref{thm:IndexSep} characterises separatrices in holomorphic dynamical systems (not only polynomial systems), where two regions with periodic orbits inside have common boundary. The winding number of a periodic orbit around a centre is $\pm 1$ and has the same sign for all periodic orbits around the same centre. However, the periodic orbits on the other side of the separatrix (and thus around another centre) must have the opposite winding number.
\begin{theorem}\label{thm:IndexSep}
	Let $z_0,z_1$ be two (neighboured) centres of the flow \eqref{eq:ode} with a holomorphic function $f$. Let the regions 
	\begin{eqnarray*}
		P_k := \{z_k\} \cup &\left\{z^*\in \mathbb{C} \,: \mbox{ solution of (\ref{eq:ode}) with } z(0)=z^* \mbox{ is a} \right.\\ & \left.\mbox{periodic orbit around }z_k \,\right\}, \,\, k\in \left\{0,1\right\}
	\end{eqnarray*}
	have common boundary component $C \subset \partial P_0 \cap \partial P_1$ (i.e.\ $C$ is a separatrix, cf. Broughan~\cite{Broughan2003a}). Then it holds  $z^* \in C$
	iff for all $\varepsilon > 0$ small enough, there exists $\, z_k^* \in U_\varepsilon(z^*) \cap P_k,  \;\,  k\in \{0,1\}$ such that the orbits $\gamma_k$ of the flow (\ref{eq:ode}) with $z(0)=z_k^*$ satisfy $\mathrm{ind}_{\gamma_0}(z_0) \cdot \mathrm{ind}_{\gamma_1}(z_1) = -1.$
\end{theorem}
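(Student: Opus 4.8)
The plan is to reduce the statement to a purely local picture near $z^*$ and to read off the two winding numbers from the orientations of the two nearby orbits, both of which are dictated by the single flow direction $f(z^*)$ along the separatrix. Two standing remarks: a periodic orbit of the smooth flow \eqref{eq:ode} is a simple closed curve (orbits cannot self-intersect, by uniqueness of solutions), so its winding number about an interior point is $\pm1$; and this winding number is a continuous $\Z$-valued function of the initial point on the connected annulus $P_k\setminus\{z_k\}$, hence the same constant $\sigma_k\in\{+1,-1\}$ for every orbit in $P_k$ --- though the argument below only needs orbits very close to $z^*$. The \enquote{if} direction is soft: $P_0$ and $P_1$ are open and disjoint, so if admissible pairs $z_k^*\in U_\varepsilon(z^*)\cap P_k$ exist for all small $\varepsilon$ then $z^*\in\overline{P_0}\cap\overline{P_1}$, and since $z^*$ lies in neither open set it lies in $\partial P_0\cap\partial P_1=C$; conversely, any $z^*$ lying in $P_0$, in $P_1$, or outside $\overline{P_0}\cup\overline{P_1}$ has a small ball meeting at most one of $P_0,P_1$, so no such pair exists there.

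For the \enquote{only if} direction, fix $z^*\in C$. A separatrix contains no equilibrium, so $f(z^*)\neq0$; after translating and rotating the plane I would assume $z^*=0$ and $f(0)\in\R_{>0}$, so that the orbit through $z^*$ is a smooth arc through the origin tangent to the real axis, along which the flow runs in the $+x$ direction, and $C$ locally cuts a small disc about $0$ into an upper and a lower half. The structural input, which I would take from Broughan's description of the neighbourhood of a centre, is that $C$ is a full boundary component of the topological discs $\overline{P_0},\overline{P_1}$ --- a simple closed curve on the compactified phase space --- and that $P_0$ and $P_1$ occupy the two \emph{opposite} local sides of $C$; say $P_0$ is locally $\{\Im z>0\}$ and $P_1$ locally $\{\Im z<0\}$ (the other labelling merely swaps both signs below). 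In particular every periodic orbit of $P_k$ stays on the $P_k$-side of $C$.

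Given small $\varepsilon$, pick $z_k^*\in U_\varepsilon(z^*)\cap P_k$ and let $\gamma_k$ be the orbit through $z_k^*$. Because the periodic orbits foliating $P_k$ are nested (no limit cycles occur, cf.\ \cite{Broughan2003}), an orbit of $P_k$ slightly larger in the nest encloses $\gamma_k$ and still passes arbitrarily close to $z^*$; hence $z^*$ lies in the unbounded component of $\C\setminus\gamma_k$, $z_k$ lies in the bounded one, and --- reading the nesting in a flow-box chart at the regular point $z^*$ --- the interior of $\gamma_k$ is, locally near $z_k^*$, the side of $\gamma_k$ facing its own centre, i.e.\ the side away from $C$: the upper side for $k=0$, the lower side for $k=1$. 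Since the tangent to $\gamma_k$ at $z_k^*$ is $f(z_k^*)/|f(z_k^*)|$, which is as close to the $+x$ direction as we wish, $\gamma_0$ passes near $z_0^*$ moving rightward with its interior to the left, so it is positively oriented and $\mathrm{ind}_{\gamma_0}(z_0)=+1$, whereas $\gamma_1$ passes near $z_1^*$ moving rightward with its interior to the right, so it is negatively oriented and $\mathrm{ind}_{\gamma_1}(z_1)=-1$. Hence $\mathrm{ind}_{\gamma_0}(z_0)\cdot\mathrm{ind}_{\gamma_1}(z_1)=-1$; the swapped labelling gives $(-1)\cdot(+1)=-1$ as well, which also shows $\sigma_0=-\sigma_1$.

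I expect the main obstacle to be not the orientation bookkeeping --- routine once the local configuration is fixed --- but establishing that configuration cleanly: that $P_0$ and $P_1$ really sit on opposite local sides of $C$; that the interior of an orbit of $P_k$ lying close to $C$ is, locally, the side away from $C$; and that $z^*$ is exterior to each $\gamma_k$. All three follow from the picture of nested periodic orbits filling the neighbourhood of a centre, but turning that picture into proof requires the absence of limit cycles for holomorphic flows, Broughan's structure theorems for such neighbourhoods, the flow-box theorem at the regular point $z^*$, and the Jordan curve theorem, together with some care that the conclusion is uniform in $\varepsilon$ and unaffected by $\gamma_k$ meeting a local transversal more than once.
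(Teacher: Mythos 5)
Your argument is correct, but the mechanism you use to fix the local orientation is different from the paper's. The ``if'' direction is essentially identical: $P_0, P_1$ disjoint open sets, $z^*$ in the closure of both, hence $z^* \in \partial P_0 \cap \partial P_1$. For the ``only if'' direction, the paper invokes Broughan's Claim~A (from the proof of his Theorem~4.1) on the continuity of the transit time: it picks two points $x \neq y$ on $C$ near $z^*$, shows that orbits $\alpha_k$ in $P_k$ passing near both have transit times $\tau(x_k', y_k')$ close to $\tau(x,y)$, hence of the same sign, and only then applies the Jordan curve theorem to conclude the interiors lie on opposite sides. You instead read the common flow direction off a single point: $f$ is continuous and nonzero at $z^*$, so the tangent $f(z_k^*)/|f(z_k^*)|$ of $\gamma_k$ at $z_k^*$ is close to $f(z^*)/|f(z^*)|$ for both $k$, and a flow-box chart at the regular point $z^*$ then yields the two opposite orientations directly. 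Your route is more elementary --- it replaces a transit-time estimate along an orbit segment by pointwise continuity of the vector field --- while the paper's transit-time approach is inherited wholesale from Broughan and arguably handles the uniformity/transversal-crossing issues you flag at the end more automatically, since it tracks a whole arc of the orbit rather than one tangent. Both arguments rest equally on the same structural facts (Broughan's description of the neighbourhood of a centre as a nest of periodic Jordan curves, with $P_0$ and $P_1$ on opposite local sides of $C$) and on the Jordan curve theorem; neither proof, yours or the paper's, supplies those from scratch.
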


\begin{proof}
	If for all $\varepsilon > 0$ there are $z_k^*  \in U_\varepsilon(z^*) \cap P_k,  \;\,  k\in \{0,1\}$ then $z^* \in \partial P_0 \cap \partial P_1$ because $P_0$ and $P_1$ have empty intersection. According to Broughan~\cite{Broughan2003a} $z^*$ is thus part of a separatrix. 
	
	The interesting part of the proof is to show that the index of trajectories changes by traversing the separatrix. Let $z^*$ be part of a boundary component $C$ of $\partial P_0 \cap \partial P_1$. The separatrix $C$ is a trajectory. So for $\varepsilon > 0$ small enough there are two points $x,y \in U_{\varepsilon/2} (z^*) \cap C$ with $x\neq y$. Claim A in the proof of \cite[Theorem 4.1]{Broughan2003a} yields that 
		$\forall \delta > 0 \, \exists \, \varepsilon_\delta> 0$ such that all orbits $\alpha_k$  with 
	\begin{equation*}
		x_k' \in \alpha_k \cap U_{\varepsilon_\delta}(x) \cap P_k \neq \emptyset~\textrm{and}~y_k' \in \alpha_k \cap U_{\varepsilon_\delta}(y) \cap P_k \neq \emptyset 
	\end{equation*}
	 it holds
	 \begin{equation*}
		  	\left| \tau(x_k',y_k')- \tau(x,y)\right| < \delta \quad  \textrm{for } k \in \left\{0,1\right\}.
	 \end{equation*}
	 Thus, we have $\left| \tau(x_0',y_0')- \tau(x_1',y_1')\right| < 2\delta$. Since $\delta>0$ is arbitrary, $\tau(x_0',y_0')$ and $\tau(x_1',y_1')$ must have the same sign. If we take orbits $\alpha_k$ such that $x_k'$ resp. $y_k'$ have minimum distance to $x$ resp. $y$ then we can make use of the Jordan curve theorem in order to show that the interiors of $\alpha_k$ (which contain $z_k$) are to the left of $\alpha_0$ in direction of the flow and to the right of $\alpha_1$ or vice versa. In other words, 
	 \begin{equation*}
	 	\mathrm{ind}_{\alpha_0}(z_0) \cdot \mathrm{ind}_{\alpha_1}(z_1) = -1.
	 \end{equation*}
\end{proof}

\begin{example}
    We consider the function $f(z) = \cosh(z-1/2)$ again. \Cref{fig:cosh} includes the direction field of $f$. It is easy to see that the index of orbits in the strip $S_k := \left\{z  \in \C \, : \, \Im z \in \big(k\pi, (k+1)\pi\big) \right\}$ is ${(-1)}^{k}$. In accordance with Theorem~\ref{thm:IndexSep}, the index changes at each separatrix $\Im z = k\pi, k \in \Z$ for $k \in \Z$.
\end{example}

\section{Separatrices of 1-D holomorphic flows and Newton flows in complex time}\label{sec:complextime}
The works of Broughan~\cite{Broughan2003,Broughan2003a,Broughan2004,Broughan2005}, of Neuberger, Schleich et al.~\cite{Neuberger2014,Neuberger2015,Schleich2018}
study specific trajectories (separatrices) dividing the phase space of particular 1-D complex dynamical systems into distinct period orbit or fixed-point stability regions enclosing. Understanding the structure and asymptotics of the separatrices in the limit $\Re z \rightarrow \infty$ for the Newton flow of the Riemann $\xi$-function might yield insight into the location of the fixed points (corresponding to $\xi${\hyp}zeroes) via topology and geometry of the solution manifold, e.g.\ using arguments based on the constant phase of Newton flow lines as pointed out in \cite{Schleich2018}.
A \enquote{model flow} which has a phase portrait topologically similar to $\xi$ and is much more accessible is the complex $\cosh$. We propose on the basis of numerical results for the holomorphic $\cosh$ flow that the introduction of complex time may have the potential to establish a setting for more complete characterisation of separatrices including their analytical and topological properties. 

Recently we proposed complex time and analytic continuation of real{\hyp}analytic dynamical systems to study the properties of slow invariant manifolds (SIM), whose phase space similarities to separatrices have been pointed out above (in particular trajectory bundling and attraction/repulsion in positive, respectively negative time direction). It became obvious that spectral information can be gained by studying imaginary time trajectories of linear and nonlinear systems. Fourier transform even yields quantitative criteria characterizing SIM in several example systems (see Dietrich and Lebiedz~\cite{Dietrich2019}).

The solution trajectories in complex time are embedded Riemann surfaces (ho\-lo\-mor\-phic curves). For the complex $\cosh$, both topology and geometry of the Riemann surfaces as well as the fibre bundle of Riemann surfaces with base space $\C$ parametrising the possible initial values of (\ref{eq:ode}) seem to be closely related to properties of separatrices (cf.\ \cref{fig:coshNewtonFlow4D}). The Riemann surfaces (as graphs over compact subsets of complex time) for the complex time $\cosh$ flow with initial values that are not on a separatrix show some kind of \enquote{bifurcation} close to the fixed points (cosh zeroes). However, the Riemann surfaces with initial values on a separatrix do not show a bifurcation (see phase pace plots \cref{fig:coshNewtonFlow2D}). Non{\hyp}periodic trajectories with amplitude oscillations in imaginary time turn into periodic orbits at some bifurcation point. 


For the complex time ODE 
\[ \dd{t}z(t) = \dd{\tau_1+\rmi \tau_2}z(t) = f\big(z(t)\big), z_0 \in \C \]
with holomorphic function $f$, there is at least a local, unique solution $z(\cdot)$ representing a holomorphic curve \cite{Ilyashenko2007}. This solution satisfies the following partial differential equations:
\begin{equation}\label{eq:complextimeODE}
	\pp{\tau_1}z(t) = f\big(z(t)\big), \quad  \pp{\tau_2}z(t) = \rmi f\big(z(t)\big), z_0 \in \C
\end{equation}

\begin{figure}[ht]
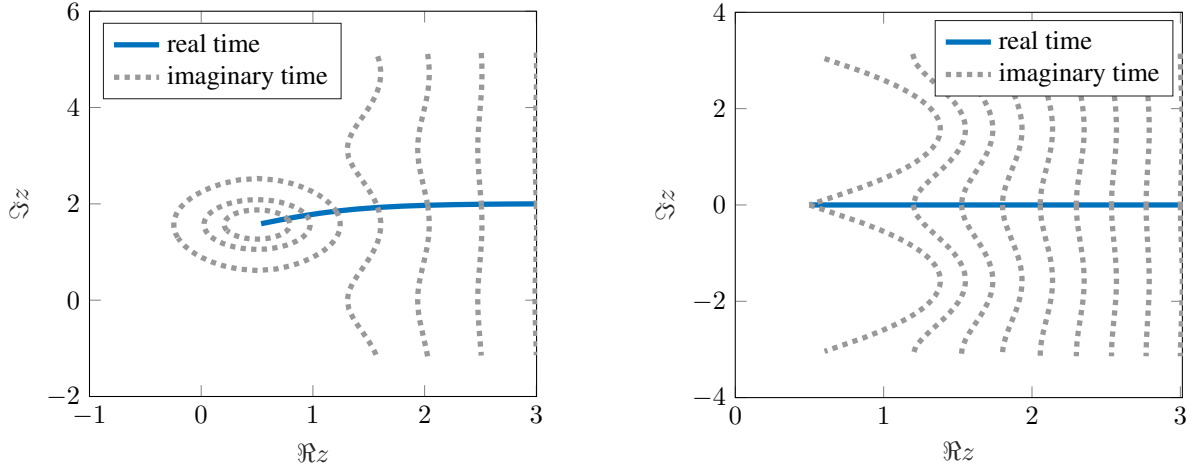

	\begin{center}
		\begin{minipage}{0.48\linewidth}
			\input{coshNFNoSep2D.tikz}
		\end{minipage}		
		\hfill
		\begin{minipage}{0.48\linewidth}
			\input{coshNFSep2D.tikz}
		\end{minipage}
		\caption{Flow of $\cosh$ in real time (blue trajectories) and imaginary time (grey trajectories). On the left figure, the initial value of the real trajectory is away from all separatrices while on the right figure, it is on a separatrix.}\label{fig:coshNewtonFlow2D}
	\end{center}
\end{figure}

\begin{figure}[ht]
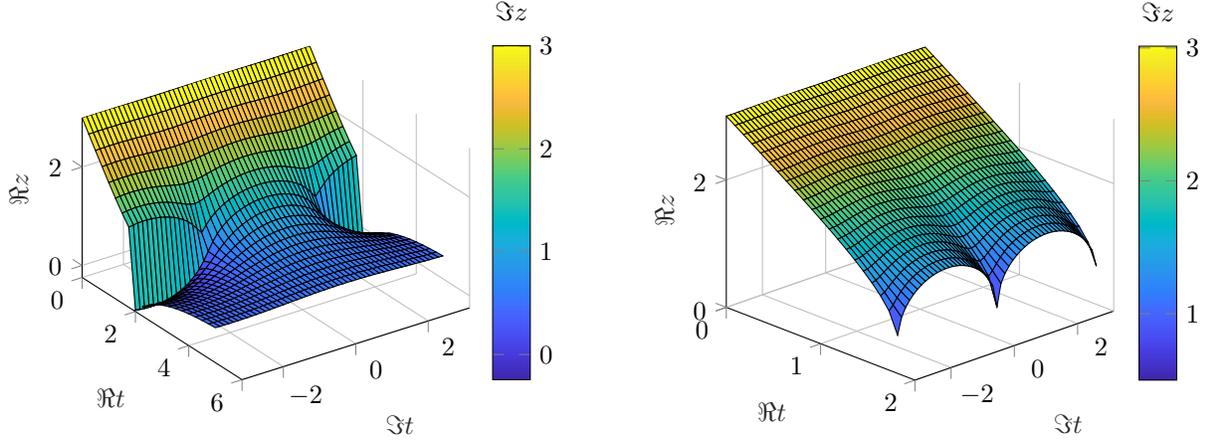

	\begin{center}
		\begin{minipage}{0.48\linewidth}
			\input{coshNFNoSep4D.tikz}
		\end{minipage}		
		\hfill
		\begin{minipage}{0.48\linewidth}
			\input{coshNFSep4D.tikz}
		\end{minipage}
		\caption{Graph of Riemann surface solutions $\big(t,z(t)\big) \in \C^2,$ where $z$ is solution of the differential equation $dz/dt = \cosh(z-1/2)$. In the left figure, the initial value $z(0)$ is away from all separatrices, while in the right figure $z(0)=3$ is on the separatrix.}\label{fig:coshNewtonFlow4D}
	\end{center}
\end{figure}
We think that this is an appropriate setting to study separatrices from a global point of view opening it for powerful tools from complex analysis, topology and algebra related to Riemann surfaces. 

Mappings of rectangles from the complex time plane along the complex{\hyp}time flow of the dynamical system to the solution manifold for a fixed initial value (see \cref{fig:coshRectangles}) allow to probe global topological properties of the Riemann surfaces which are by their complex time parameterization naturally coverings of compact subsets of $\mathbb{C}$ parameterised by the initial value $z_0$ of (\ref{eq:complextimeODE}).

\begin{figure}[htp]
	\begin{center}
		\input{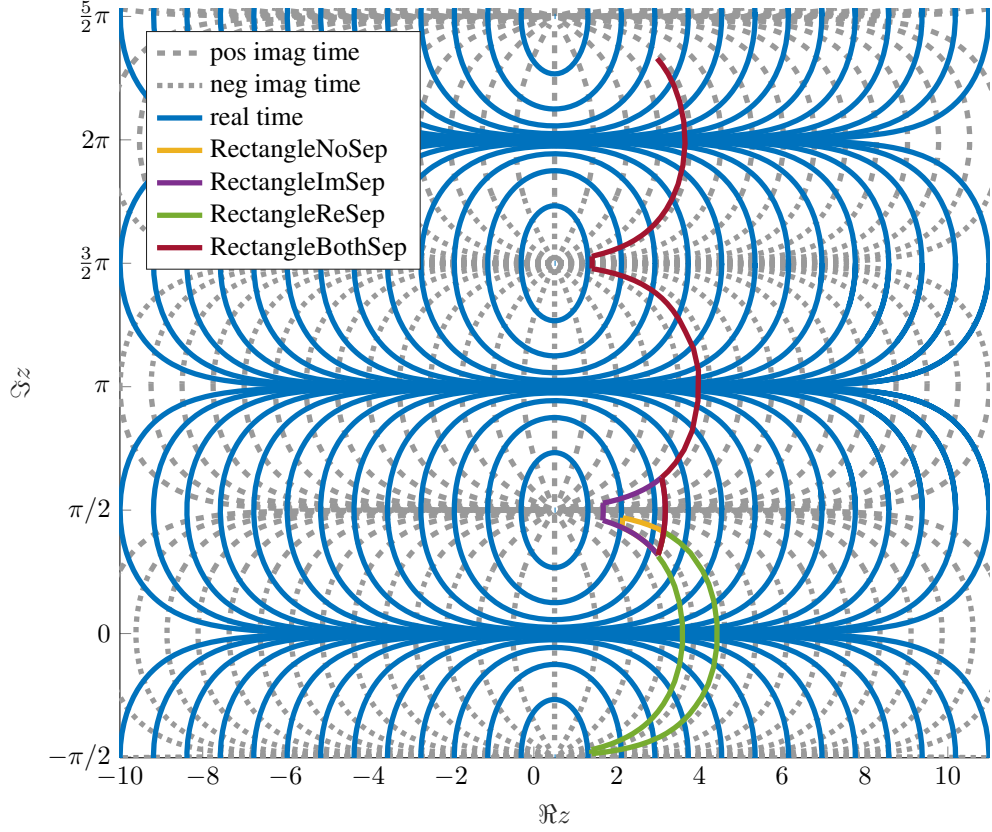}
		\caption{Plot of $\cosh$ trajectories in real and imaginary time. Four example rectangles from the complex plane are taken as \enquote{time{\hyp}input} for solution trajectories. Depending on whether they cross the real and the imaginary separatrix they yield closed curves on the solution manifold or not. NoSep: No separatrix is crossed, ImSep: Imaginary-Time Separatrix is crossed, ReSep: Real-Time Separatrix is crossed, BothSep: Both imaginary-time and real-time separatrices are crossed.}\label{fig:coshRectangles}
	\end{center}
\end{figure}

\subsection{Newton Flows of \texorpdfstring{$\cosh$}{cosh} and \texorpdfstring{$\xi$}{xi}}
Newton flow lines $z(t)$ (solutions of (\ref{eq:newtonflow})) share the characteristic property of constant phase of the complex number $f(z(t))$ along solutions $z(t)$ and can be interpreted as gradient flow lines with respect to a Sobolev gradient defined by a Riemannian metric induced by the vector field \cite{Neuberger1999}.
Lines of constant modulus $|f(z(t)|$ are orthogonal to Newton flow lines. In a complex time view (see (\ref{eq:complextimeODE})), the latter correspond to imaginary time trajectories. Newton flows can be desingularised removing the singularities at zeroes of $f'$ by rescaling time which results in the same flow lines with different time parameterisation \cite{Jongen1988,Benzinger1991}.
The singularities of the Newton flow turn into hyperbolic fixed points (saddles). This brings them close to earlier considerations of this article studying saddles at infinity (see \cref{sec:infinity}). 

In particular, the inverted Newton flow of the Riemann $\zeta${\hyp}function occurs in the Riemann{\hyp}von{\hyp}Mangoldt explicit formula with $\zeta${\hyp}zeroes $\rho_n$:

\begin{equation*}
    \psi_0(x) = \frac{1}{2\pi \rmi}\int\limits_{\sigma-\rmi \infty}^{\sigma +\rmi \infty}\left(-\frac{\zeta'(z)}{\zeta(z)}\right) \frac{x^z}{z} \dif z = x -\sum\limits_n \frac{x^{\rho_n}}{\rho_n} - \log(2\pi) -\frac{1}{2} \log(1-x^{-2}),
\end{equation*}
where
\begin{equation*}
    \psi(x) = \sum\limits_{p^k \le x} \log p \quad \mathrm{and} \quad \psi_0(x) = \frac{1}{2} \lim\limits_{h\rightarrow 0}\left(\psi(x+h)+\psi(x-h)\right)
\end{equation*}
with the second Chebyshev function $\psi$.

This was the original motivation for the authors to study a dynamical system based on $\zeta$ Newton flow and to introduce complex time. The Riemann{\hyp}von{\hyp}Mangoldt explicit formula contains the integral over the inverted $\zeta$ Newton flow in imaginary direction and relates this integral to a sum over $\zeta${\hyp}zeroes. Thus the phase portrait of the $\zeta$ Newton flow, respectively \emph{symmetric} $\xi$ flow, should contain some information on the $\xi$-zeroes. 

We focus on the Newton flow of the symmetric (cf.\ \emph{functional equation}), entire $\xi${\hyp}function whose phase portrait has characteristic similarities to the complex cosh Newton flow.

\begin{equation*}
    \xi(z) := \frac{1}{2}z(z-1)\Gamma\left(\frac{z}{2}\right)\pi^{-z/2} \zeta(z).
\end{equation*}

Using the logarithmic derivative of the formula (see \cite[p.~47]{Edwards2001}) $\xi(z)=\xi(0) \prod\limits_{n}(1-\frac{z}{\rho_n})$ and separation of variables for the Newton Flow \eqref{eq:newtonflow} of $\xi(z)$ 
we get 
    \begin{align*}
	& & -\frac{\xi'(z)}{\xi(z)} \dif z&=1 \cdot \dif t, & \left| \quad \int_{} \right.\\
	&\Leftrightarrow  & -\int\limits_{z_0}^{z(T)}\frac{\xi'(z)}{\xi(z)} \dif z &= \int\limits_{0}^{T} 1 \dif t, & z(0)=z_0\\
	&\Leftrightarrow & -\int\limits_{z_0}^{z(T)} \sum\limits_{n} \frac{1}{z-\rho_n}\dif z &= T &\\
	&\Leftrightarrow & -\sum\limits_{n}\int\limits_{z_0}^{z(T)} \frac{1}{z-\rho_n}\dif z &= T &\\
	&\Leftrightarrow & \sum\limits_{n}\log(z_0-\rho_n) - \sum\limits_{n} \log(z(T)-\rho_n) &\equiv T & (\mathrm{mod}\; 2\pi \rmi)\\
	&\Leftrightarrow & \sum\limits_{n} \log \frac{z(T)-\rho_n}{z_0-\rho_n} &\equiv -T & (\mathrm{mod}\; 2\pi \rmi) \\
	&\Leftrightarrow & \exp\left(\sum\limits_{n} \log \frac{z(T)-\rho_n}{z_0-\rho_n}\right) & \equiv \rme^{-T} & (\mathrm{mod}\; 2\pi \rmi) \\
	&\Leftrightarrow & \prod\limits_{n} \frac{z(T)-\rho_n}{z_0-\rho_n} &\equiv \rme^{-T} & (\mathrm{mod}\; 2\pi \rmi)
    \end{align*}
When the integration path encloses zeroes of $\xi$, the residue theorem has to be applied which brings addition of $k2\pi \rmi$. 
Finite approximation of the infinite product of $\xi${\hyp}zeroes allows the definition of a polynomial
\begin{equation*}
    P_m(z;T,z_0):= \prod\limits_{n=1}^{m} \frac{z-\rho_n}{z_0-\rho_n} - \rme^{-T}.
\end{equation*}
An alternative derivation of $P_m(z;T,z_0)=$ is to combine the product formula
\begin{equation*}
    \xi(z) = \xi(z_0)\prod\limits_{n} \frac{z-\rho_n}{z_0-\rho_n}
\end{equation*}
with a line of constant phase as solution trajectory of the Newton flow of $\xi$
\begin{equation*}
    \xi\big(z(T)\big) = \xi(z_0)\rme^{-T}.
\end{equation*}
This also yields 
\begin{equation*}
    \rme^{-T} = \prod\limits_n \frac{z(T)-\rho_n}{z_0-\rho_n},
\end{equation*}
with approximating polynomials $P_m\big(z;T,z_0\big)$.

Contributions to the Newton flow of those terms corresponding to $\xi$ zeros \enquote{far away} from the initial value should be small for $|T|$ not too large.
The complex value of the solution trajectory at the specific time $T \in \C$ starting at $z(0)=z_0$ can then by approximated by the polynomial equation
\begin{equation}\label{eq:polynomapproxXi}
    P_m(z;T;z_0) = 0,
\end{equation}
which defines a smooth complex algebraic curve.
The approximated Newton flow vector field is in this case a rational function and can be transformed to a polynomial by rescaling time as in \cite{Benzinger1991}. 
Thus, the approach to address fixed points at infinity and potentially associated separatrices presented in this article (\cref{sec:infinity}) might be applicable to study the asymptotics of separatrices in polynomial approximations of the Newton-flow of $\xi$. Figures~\ref{fig:xiApprox4} and \ref{fig:xiApprox40} illustrate that with increasing number of complex conjugated $\xi$-zeroes the polynomial flow solution manifolds comes closer to the $\xi$-Newton flow phase portrait in \cref{fig:xiNewtonFlow}. An approximation theoretical analysis is subject to future research. 
Figures~\ref{fig:xiApprox4} and \ref{fig:xiApprox40} were created with MATLAB. The first 100 $\xi${\hyp}zeroes of Odlyzko \cite{Odlyzko} were used for numerical calculations. Eq.\ \eqref{eq:polynomapproxXi} is solved for discrete time points $T$ for each initial value $z_0$ on a discretised grid of the compact time set $[-7,8] \times [-1,30]\rmi$. \cref{fig:xiNewtonFlow} shows the phase portrait of the Newton flow of $\xi$. It was calculated using Mathematica.

\begin{figure}
    \begin{minipage}{0.48\linewidth}
        \centering
        \input{xiApproxDeg4.tikz}
        \caption{$\xi$ Newton flow ap\-prox\-i\-ma\-tion using \eqref{eq:polynomapproxXi}, $m=4$ zeroes of $\xi$ (first two zeroes and its complex conjugates).}\label{fig:xiApprox4}
    \end{minipage}
    \hfill
    \begin{minipage}{0.48\linewidth}
	\centering
	\input{xiApproxDeg40.tikz}
	\caption{$\xi$ Newton flow ap\-prox\-i\-ma\-tion using \eqref{eq:polynomapproxXi}, $m=40$ zeroes of $\xi$ (first 20 zeroes and its complex conjugates).}\label{fig:xiApprox40}
    \end{minipage}
\end{figure}   
\begin{figure}
    \centering
    \includegraphics[width=0.4\linewidth]{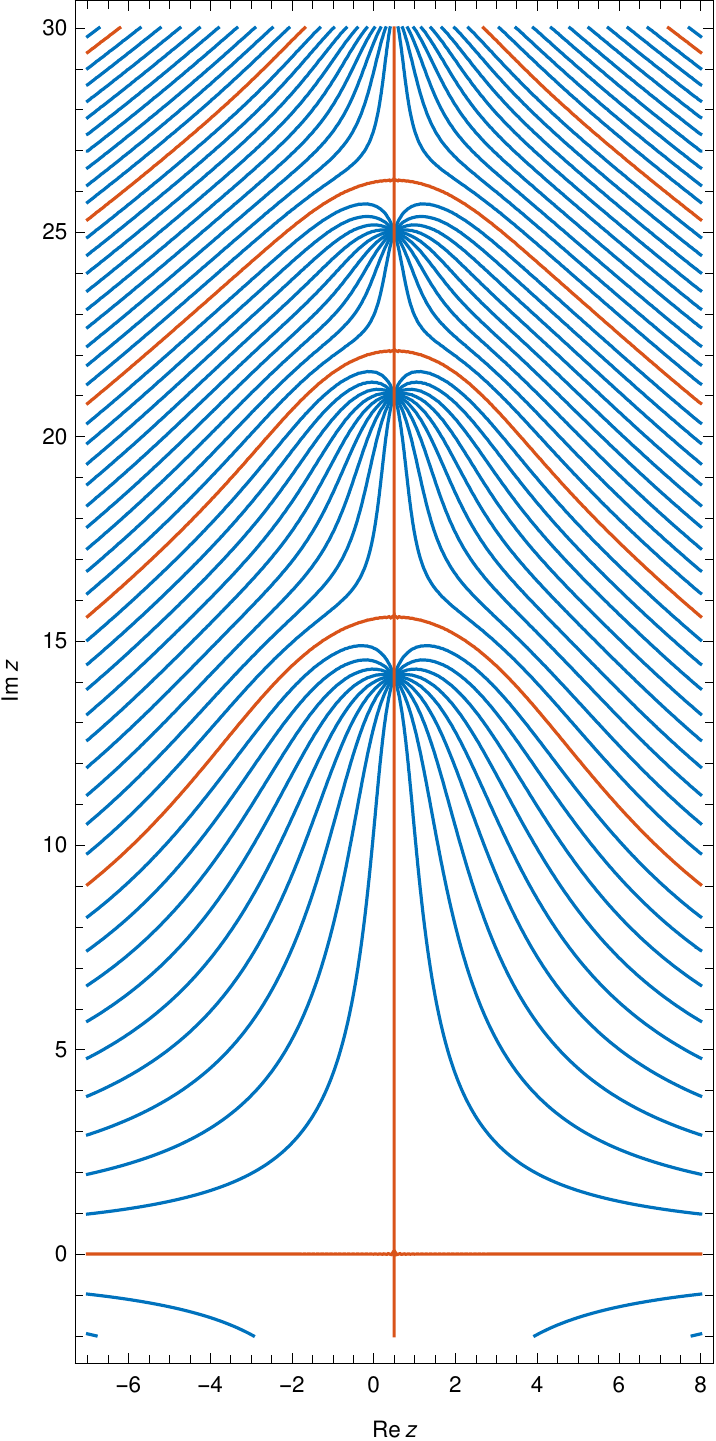}
    \caption{Newton flow of $\xi$. Trajectories are plotted by calculating lines of constant phase. Horizontal red lines are separatrices.}\label{fig:xiNewtonFlow}
\end{figure}   

If $z(\cdot)$ is a periodic orbit of the Newton flow of $\xi$ in complex time with \enquote{complex period} $T \in \C$ a necessary condition for $z(T)=z_0 \in \C$ is
\begin{equation*}
    \rme^{-T} = \prod\limits_{n} \frac{z(T)-\rho_n}{z_0-\rho_n} = 1.
\end{equation*}
Therefore, $T$ must be a multiple of $2\pi \rmi$, i.e.\ $T = 2\pi k \rmi, k \in \Z$. 

We think that such polynomial approximations of the $\xi$ Newton flow based on an explicit formula for the $\xi${\hyp}zeroes might allow to build a bridge into the world of algebraic geometry and corresponding approaches to the Riemann $\xi${\hyp}function and maybe even allow a view on the Riemann hypothesis. There might also be a relation to the Hilbert{\hyp}Polya conjecture associating the $\xi${\hyp}zeroes with eigenvalues of some (self{\hyp}adjoint) operator. Note that the Riemann dynamics, a hypothetic classical chaotic dynamical systems which should lead to the Hilbert{\hyp}Polya operator via quantization, is supposed to have complex periodic orbits with periods $T_k=k\pi \rmi, k \in \N$ \cite{Sierra11,Berry99} which is not the case for the imaginary time trajectories of the $\xi$ Newton flow. Due to successive Riemann surface bifurcations of the Newton flow at zeroes of $\xi'$, the dynamical systems in complex time should be chaotic in some sense with a specific bifurcation{\hyp}related route to chaos.
We speculate that an interesting operator might be constructed via the Newton flow map of $\xi$ which seems to be related to a Hamiltonian system (the modulus $|\xi(z)|$ is a conserved quantity along imaginary{\hyp}time trajectories of the Newton flow and the phase of the complex number $\xi(z)$ along real{\hyp}time trajectories!). In the physical literature a specific formal relation between statistical mechanics and quantum mechanics is referred to as Wick rotation \cite{Wick54}, sometimes called a mysterious connection: when replacing time in the evolution operator $\rme^{-\frac{\rmi}{\hbar}Ht}$ of some Hamiltonian $H$ (from time{\hyp}dependent Schr\"odinger equation) by negative imaginary time $t \rightarrow -\rmi t$ and then $\frac{t}{\hbar}$ by $\frac{1}{T}$ one ends up with the time{\hyp}independent state density operator $\rme^{-\frac{H}{T}}$ from equilibrium statistical mechanics. We propose that this view might be of particular relevance in our context. Looking at \cref{fig:xiApproxImag6} and \cref{fig:xiApproxImag20} it becomes obvious how cyclic imaginary time trajectories are transported in real time direction along the Newton flow. When crossing in this time evolution a zero of the derivative $P_m(z;T,z_0)$ on a separatrix, the Riemann solution surface in complex time bifurcates and in imaginary time direction two cyclic branches occur, one enclosing a single zero and the other enclosing all remaining zeroes. This is reminiscent of a phase transition. 

In addition to the obvious algebraic issue to study the interesting distribution of zeroes of the polynomials $P_m(z;T,z_0)$ (red dots in \cref{fig:xiApproxImag6}, \cref{fig:xiApproxImag20}) as a function of the polynomial degree and their behaviour at Riemann surface bifurcation points, a sketch of more general potential relations to algebraic geometry might look as follows, which is, however, subject to current work: Compact Riemann surfaces $\mathcal{R}$ are biholomorphically equivalent to smooth complex projective algebraic curves. The latter are also defined as curves over some number field under the conditions of Belyi's theorem \cite{Belyi79, Belyi02}, i.e.\ a Riemann surface is defined as an algebraic curve over a finite field if and only if there is a non{\hyp}constant meremorphic function on $\mathcal{R}$ which branches in at most three points, i.e.\ $\mathcal{R}$ is a curve defined over the algebraic numbers $\bar{\mathbb{Q}}$  if and only if a morphism $\mathcal{R} \rightarrow \mathbb{P}^1_{\C}$ with at most three critical values exists. This establishes a link between topological coverings and field extensions. With a view on the bifurcations/ramifications of Riemann surfaces pointed out above in the context of complex{\hyp}time Newton flows (see \cref{fig:coshNewtonFlow2D} and \cref{fig:coshNewtonFlow4D}), it might turn out to be particularly fruitful to look at these bifurcations in the context of Belyi's theorem. 

Another interesting direction could be to establish a link between the dynamical system that we suggested, i.e.\ the $\xi$ Newton flow in complex time and its polynomial approximations based on the $\xi${\hyp}zeroes to the work of Deninger \cite{Deninger1998} investigating analogies between number theory and dynamical systems.

\begin{figure}
    \begin{minipage}{0.47\linewidth}
        \centering
	\includegraphics[width=0.9\linewidth]{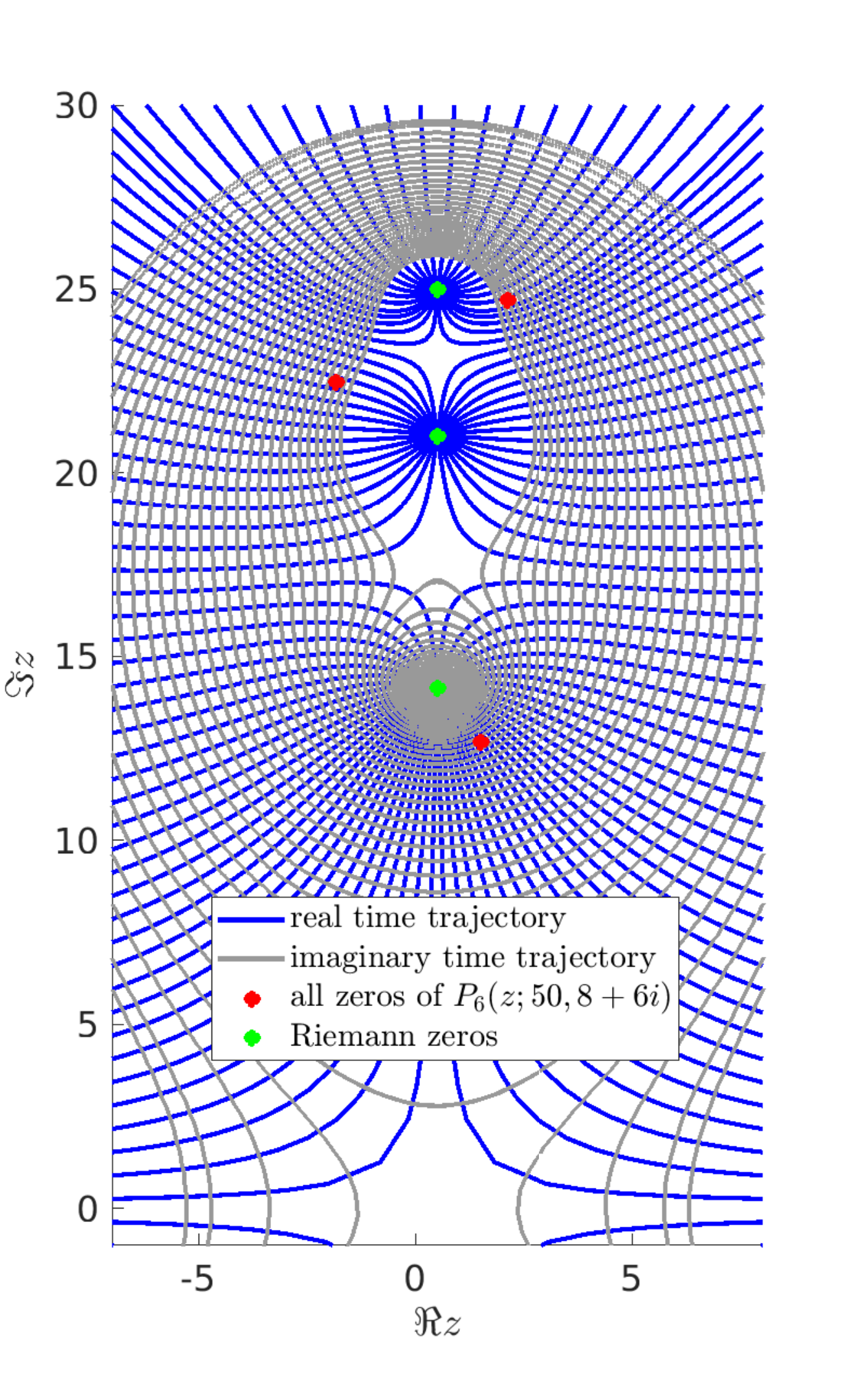}
	\caption{Real and imaginary time $\xi$ Newton flow solution ap\-prox\-i\-mated by using \eqref{eq:polynomapproxXi}, $m=6$ zeroes of $\xi$ (first three zeroes and their complex conjugates).}\label{fig:xiApproxImag6}
    \end{minipage}
    \hfill
    \begin{minipage}{0.47\linewidth}
	\centering
	\includegraphics[width=0.9\linewidth]{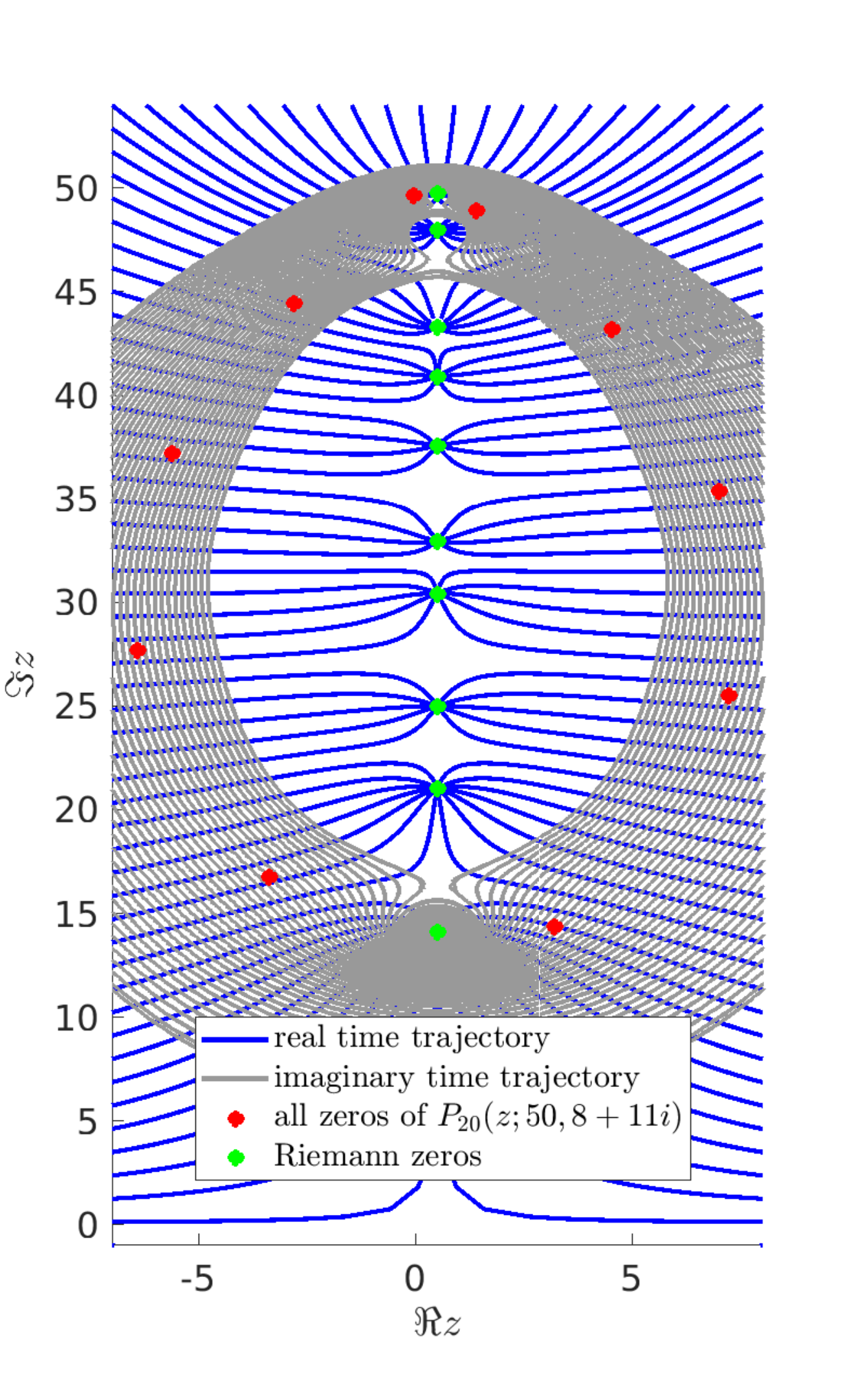}
	\caption{Real and imaginary time $\xi$ Newton flow solution ap\-prox\-i\-mated by using \eqref{eq:polynomapproxXi}, $m=20$ zeroes of $\xi$ (first 10 zeroes and their complex conjugates).}\label{fig:xiApproxImag20}
    \end{minipage}
\end{figure}   

\section{Summary and Outlook}
Central objects of this article are separatrices in holomorphic dynamical systems. Analytical results for polynomial systems are that fixed points at infinity are separatrix generators in that sense that for each critical point at infinity there is at least one separatrix. Moreover, we could reveal some conditions under which a 1{\hyp}D holomorphic complex dynamical system (that cannot have finite limit cycles according to Broughan; cf.\ \cite{Broughan2003}) also cannot have infinite ones.  

A quantifiable topological property of a separatrix, which is the common boundary of periodic orbits around two centres, is that the index of the orbits changes its sign when crossing a separatrix. This is a necessary and sufficient condition for separatrices in these systems. 

The following list contains ideas and remarks for future research.
\be[(a)] 
	\item 
		In this work only one{\hyp}dimensional complex dynamical systems were analysed. R{\"o}hrl and Walcher~\cite{Roehrl1997} may yield deeper insights for higher dimensional systems. The equivalent definition of Proposition~\ref{prop:InfPointCrit} \ref{prop:LinDep} for critical points at infinity is used in Proposition (1.7) of \cite{Roehrl1997} as sufficient conditions for equilibria of the dynamical system. 
	\item 
		Statements for holomorphic right{\hyp}hand sides $f(z)$ that are not polynomials or directly transformed into a polynomial like $f(z) = \cosh(z)$ are subject to current work. 
	\item 
		The reader may suspect that similar results than in this article could be obtained for meromorphic functions. Especially, the Newton flow of polynomial systems has high potential to be well understood and it may yield a possibility to understand separatrices and the Newton flow of $\xi$ better.
	\item We are convinced that introducing complex time $t = \tau_1 + \rmi \tau_2$ and analysing Riemann surface solutions may generate a setting to address general properties of separatrices, in particular for the $\xi$ Newton flow. We presented central observations and pointed our potential routes to follow in \cref{sec:complextime}.
\ee

\section*{Acknowledgments}
We gratefully acknowledge the Klaus\hyp Tschira\hyp Stiftung for financial support. Furthermore, the authors thank Jörn Dietrich, Johannes Poppe and Wolfgang Schleich for discussions on this topic.

\bibliography{literature.bib}

\begin{thebibliography}{30}
\providecommand{\natexlab}[1]{#1}
\providecommand{\url}[1]{\texttt{#1}}
\expandafter\ifx\csname urlstyle\endcsname\relax
  \providecommand{\doi}[1]{doi: #1}\else
  \providecommand{\doi}{doi: \begingroup \urlstyle{rm}\Url}\fi

\bibitem[Broughan(2003{\natexlab{a}})]{Broughan2003}
Kevin~A. Broughan.
\newblock Holomorphic flows on simply connected regions have no limit cycles.
\newblock \emph{Meccanica}, 38\penalty0 (6):\penalty0 699--709,
  2003{\natexlab{a}}.

\bibitem[Broughan(2003{\natexlab{b}})]{Broughan2003a}
Kevin~A. Broughan.
\newblock Structure of sectors of zeros of entire flows.
\newblock \emph{Topology Proceedings}, 27\penalty0 (2):\penalty0 379--394,
  2003{\natexlab{b}}.

\bibitem[Neuberger et~al.(2014)Neuberger, Feiler, Maier, and
  Schleich]{Neuberger2014}
J.~W. Neuberger, C.~Feiler, H.~Maier, and W.~P. Schleich.
\newblock Newton flow of the {R}iemann zeta function: separatrices control the
  appearance of zeros.
\newblock \emph{New Journal of Physics}, 16\penalty0 (10):\penalty0 103023,
  2014.

\bibitem[Schleich et~al.(2018)Schleich, Bezd{\v{e}}kov{\'a}, Kim, Abbott,
  Maier, Montgomery, and Neuberger]{Schleich2018}
W.~P. Schleich, I.~Bezd{\v{e}}kov{\'a}, M.~B. Kim, P.~C. Abbott, H.~Maier,
  H.~L. Montgomery, and J.~W. Neuberger.
\newblock Equivalent formulations of the {R}iemann hypothesis based on lines of
  constant phase.
\newblock \emph{Physica Scripta}, 93\penalty0 (6):\penalty0 065201, 2018.

\bibitem[Perko(2000)]{Perko2000}
Lawrence Perko.
\newblock \emph{Differential equations and dynamical systems}.
\newblock Springer, 2000.

\bibitem[Davis and Skodje(1999)]{Davis1999}
Michael~J. Davis and Rex~T. Skodje.
\newblock Geometric investigation of low dimensional manifolds in systems
  approaching equilibrium.
\newblock \emph{Journal of Chemical Physics}, 111\penalty0 (3):\penalty0
  859--874, 1999.

\bibitem[Al-Khateeb et~al.(2009)Al-Khateeb, Powers, Paolucci, Sommese, Diller,
  Hauenstein, and Mengers]{Al-Khateeb2009}
Ashraf~N. Al-Khateeb, Joseph~M. Powers, Samuel Paolucci, Andrew~J. Sommese,
  Jeffrey~A. Diller, Jonathan~D. Hauenstein, and Joshua~D. Mengers.
\newblock One dimensional slow invariant manifolds for spatially homogenous
  reactive systems.
\newblock \emph{Journal of Chemical Physics}, 131\penalty0 (2):\penalty0
  024118, 2009.

\bibitem[Broughan and Barnett(2004)]{Broughan2004}
Kevin~A. Broughan and A.~Ross Barnett.
\newblock The holomorphic flow of the {R}iemann zeta function.
\newblock \emph{Mathematics of Computation}, 73\penalty0 (246):\penalty0
  987--1004, 2004.

\bibitem[Broughan(2005)]{Broughan2005}
Kevin~A. Broughan.
\newblock The holomorphic flow of {R}iemann's function $\xi(z)$.
\newblock \emph{Nonlinearity}, 18\penalty0 (3):\penalty0 1269--1294, 2005.

\bibitem[Heiter and Lebiedz(2018)]{Heiter2018}
Pascal Heiter and Dirk Lebiedz.
\newblock Towards differential geometric characterization of slow invariant
  manifolds in extended phase space: sectional curvature and flow invariance.
\newblock \emph{SIAM Journal on Applied Dynamical Systems}, 17\penalty0
  (1):\penalty0 732--753, 2018.

\bibitem[Lebiedz and Unger(2016)]{Lebiedz2016}
Dirk Lebiedz and Jonas Unger.
\newblock On unifying concepts for trajectory based slow invariant attracting
  manifold computation in kinetic multiscale models.
\newblock \emph{Mathematical and Computer Modelling of Dynamical Systems},
  22\penalty0 (2):\penalty0 87--112, 2016.

\bibitem[Lebiedz et~al.(2011)Lebiedz, Siehr, and Unger]{Lebiedz2011}
D.~Lebiedz, J.~Siehr, and J.~Unger.
\newblock A variational principle for computing slow invariant manifolds in
  dissipative dynamical systems.
\newblock \emph{SIAM Journal on Scientific Computing}, 33\penalty0
  (2):\penalty0 703--720, 2011.

\bibitem[Meiss(2007)]{Meiss2007}
James~D. Meiss.
\newblock \emph{Differential dynamical systems}.
\newblock Society for Industrial and Applied Mathematics, 2007.

\bibitem[Roeder(2003)]{Roeder2003}
Roland K.~W. Roeder.
\newblock On {P}oincar{\'e}'s fourth and fifth example of limit cycles at
  infinity.
\newblock \emph{Journal of Mathematics}, 33\penalty0 (1):\penalty0 353--378,
  2003.

\bibitem[Gingold and Solomon(2013)]{Gingold2013}
Harry Gingold and Daniel Solomon.
\newblock More compactification for differential systems.
\newblock \emph{Advances in Pure Mathematics}, 03\penalty0 (01):\penalty0
  190--203, 2013.

\bibitem[Neuberger et~al.(2015)Neuberger, Feiler, Maier, and
  Schleich]{Neuberger2015}
J.~W. Neuberger, C.~Feiler, H.~Maier, and W.~P. Schleich.
\newblock The {R}iemann hypothesis illuminated by the {N}ewton flow of $\zeta$.
\newblock \emph{Physica Scripta}, 90\penalty0 (10):\penalty0 108015, 2015.

\bibitem[Dietrich and Lebiedz(2019)]{Dietrich2019}
Jörn Dietrich and Dirk Lebiedz.
\newblock A spectral view on slow invariant manifolds in complex-time dynamical
  systems.
\newblock \emph{arXiv preprint arXiv:1912.00748} [math.DS], 2019.

\bibitem[Ilyashenko(2007)]{Ilyashenko2007}
Yulij Ilyashenko.
\newblock \emph{Lectures on Analytic Differential Equations}.
\newblock American Mathematical Society, 12 2007.

\bibitem[Neuberger(1999)]{Neuberger1999}
John~W. Neuberger.
\newblock Continuous {N}ewton's method for polynomials.
\newblock \emph{The Mathematical In\-telli\-gen\-cer}, 21\penalty0
  (3):\penalty0 18--23, 1999.

\bibitem[Jongen et~al.(1988)Jongen, Jonker, and Twilt]{Jongen1988}
H.~Th. Jongen, P.~Jonker, and F.~Twilt.
\newblock The continuous, desingularized {N}ewton method for meromorphic
  functions.
\newblock \emph{Acta Applicandae Mathematicae}, 13\penalty0 (1-2):\penalty0
  81--121, 1988.

\bibitem[Benzinger(1991)]{Benzinger1991}
Harold~E. Benzinger.
\newblock Plane autonomous systems with rational vector fields.
\newblock \emph{Transaction of the American Mathematical Society}, 326\penalty0
  (2):\penalty0 465--483, 1991.

\bibitem[Edwards(2001)]{Edwards2001}
H.~M. Edwards.
\newblock \emph{Riemann's zeta function}.
\newblock Dover Publications Inc., 2001.

\bibitem[Odlyzko()]{Odlyzko}
Andrew Odlyzko.
\newblock \url{http://www.dtc.umn.edu/~odlyzko/zeta_tables/zeros2}.
\newblock Accessed October 15th, 2019.

\bibitem[Sierra and Rodriguez-Laguna(2011)]{Sierra11}
G.~Sierra and J.~Rodriguez-Laguna.
\newblock H=xp model revisited and the riemann zeros.
\newblock \emph{Physical Review Letters}, 106:\penalty0 200201, 2011.

\bibitem[Berry and Keating(1999)]{Berry99}
M.V. Berry and J.P. Keating.
\newblock The riemann zeros and eigenvalue asymptotics.
\newblock \emph{SIAM Review}, 41\penalty0 (2):\penalty0 236, 1999.

\bibitem[Wick(1954)]{Wick54}
G.~C. Wick.
\newblock Properties of bethe-salpeter wave functions.
\newblock \emph{Physical Review}, 96:\penalty0 1124--1134, 1954.

\bibitem[Belyi(1979)]{Belyi79}
G.~V. Belyi.
\newblock On galois extensions of a maximal cyclotomic field.
\newblock \emph{Izvestiya Rossiiskoi Akademii Nauk. Seriya Matematicheskaya},
  page 267, 1979.

\bibitem[Belyi(2002)]{Belyi02}
G.~V. Belyi.
\newblock A new proof of the three point theorem.
\newblock \emph{Sbornik: Mathematics}, 193\penalty0 (3):\penalty0 21, 2002.

\bibitem[Deninger(1998)]{Deninger1998}
C.~Deninger.
\newblock Some analogies between analysis on foliated spaces and arithmetic
  geometry.
\newblock \emph{Documenta Mathematica}, Extra Volume ICM 1:\penalty0 163--186,
  1998.

\bibitem[R{\"o}hrl and Walcher(1997)]{Roehrl1997}
Helmut R{\"o}hrl and Sebastian Walcher.
\newblock Projections of polynomial vector fields and the {P}oincar{\'e}
  sphere.
\newblock \emph{Journal of Differential Equations}, 139\penalty0 (1):\penalty0
  22--40, 1997.

\end{thebibliography}

\end{document}